\newcommand*{\argmin}[1]{{\underset{#1}{\operatorname{argmin}}}}
\newcommand*{\nm}[1]{{\left\|#1\right \|}}
\newcommand*{\nmu}[1]{{\|#1\|}}
\newcommand*{\bbC}{\mathbb{C}}
\newcommand*{\bbI}{\mathbb{I}}
\newcommand*{\bbR}{\mathbb{R}}
\newcommand*{\bbZ}{\mathbb{Z}}
\newcommand*{\cT}{\mathcal{T}}
\newcommand*{\rH}{\mathrm{H}}
\def\D{\,\mathrm{d}}
\def\I{\mathrm{i}}
\def\E{\mathrm{e}}
\newtheorem{theorem}{Theorem}
\newtheorem{lemma}[theorem]{Lemma}
\newtheorem{remark}[theorem]{Remark}
\title{Stable and accurate least squares radial basis function approximations on bounded domains}
\author{Ben Adcock, Daan Huybrechs and C{\'e}cile Piret}
\begin{document}

\maketitle

\begin{abstract}
The computation of global radial basis function (RBF) approximations requires the solution of a linear system which, depending on the choice of RBF parameters, may be ill-conditioned. We study the stability and accuracy of approximation methods using the Gaussian RBF in all scaling regimes of the associated shape parameter. The approximation is based on discrete least squares with function samples on a bounded domain, using RBF centers both inside and outside the domain. This results in a rectangular linear system. We show for one-dimensional approximations that linear scaling of the shape parameter with the degrees of freedom is optimal, resulting in constant overlap between neighbouring RBF's regardless of their number, and we propose an explicit suitable choice of the proportionality constant. We show numerically that highly accurate approximations to smooth functions can also be obtained on bounded domains in several dimensions, using a linear scaling with the degrees of freedom per dimension. We extend the least squares approach to a collocation-based method for the solution of elliptic boundary value problems and illustrate that the combination of centers outside the domain, oversampling and optimal scaling can result in accuracy close to machine precision in spite of having to solve very ill-conditioned linear systems.
\end{abstract}

\section{Introduction}

\subsection{Radial basis function approximations}\label{ssec:rbf_approx}

Radial basis functions provide a convenient way to approximate functions in one or more dimensions based on translates of a single function $\phi(r)$. The argument $r = \Vert x - \xi \Vert$ is the radial (Euclidean) distance between an \emph{evaluation point} $x$ and a \emph{center} $\xi$. The function $\phi$ itself is called the radial basis function (RBF) (or radial function). By covering a domain with centers, functions can be approximated using linear combinations of the associated centered RBFs and this approximation is traditionally obtained by interpolation. A wide variety of radial basis functions exist with very different properties: for example, $\phi(r)$ may be smooth or non-smooth at $r=0$, and it may be decaying or increasing for $r \to \infty$. The most commonly used smooth radial functions are listed in Table \ref{RadialFunctions}. Note that they all admit a \emph{shape parameter} $\varepsilon$ that controls their shape. This leads to an approximation of the form
\begin{equation}\label{RBF_direct}
 f(x) \approx \sum_{n=1}^N \lambda_n \phi(\varepsilon(x - \xi_n)).
\end{equation}
Small values of the shape parameter dilate the RBF: the limit $\varepsilon \to 0$ is often referred to as the \emph{flat limit}. The choice of an appropriate shape parameter has significant consequences for both accuracy and stability of corresponding numerical methods. Most crucial is to decide how to adjust the value of $\varepsilon$ when the number of centers, $N$, grows. One can either keep $\varepsilon$ fixed, or make it grow with $N$, \cite{Platte2011}. In this study, we will focus on sublinear and linear scalings of $\varepsilon$ with $N$. That is, we will consider respectively $\varepsilon = cN^{\alpha}$, for $0<\alpha<1$, and $\varepsilon = cN$, for some positive constant $c$.

\begin{table}[!ht]
\centering
\begin{tabular}{ccc}
\hline
\textbf{Name of RBF}  & \textbf{Abbreviation}  & \textbf{Definition}\tabularnewline
\hline
Gaussian  & GA  & $e^{-(\varepsilon r)^{2}}$\tabularnewline
Multiquadric  & MQ  & $\sqrt{1+(\varepsilon r)^{2}}$\tabularnewline
Inverse Quadratic  & IQ  & $\frac{1}{1+(\varepsilon r)^{2}}$\tabularnewline
Inverse Multiquadric  & IMQ  & $\frac{1}{\sqrt{1+(\varepsilon r)^{2}}}$\tabularnewline
\hline 
\end{tabular}\caption{Definitions of some common radial functions $\phi(\varepsilon r)$.}\label{RadialFunctions}
\end{table}

\subsection{Existing results and analysis}\label{ssec:rbf_existing}

Generally speaking, smaller values of $\varepsilon$ lead to higher approximation accuracy, but worse condition numbers. Larger values of $\varepsilon$ have better conditioning, but yield less accuracy or even, in the case of bounded domains, no accuracy at all. The trade-off between small and large shape parameters was referred to in \cite{Schaback95} as an \emph{uncertainty principle}. However, this tradeoff was shown to originate not from the interpolation problem, which is well posed, but from numerically solving the problem via the standard representation (\ref{RBF_direct}), having to multiply large oscillatory expansion coefficients $\lambda_n$ with flat radial function translates $\phi(\varepsilon(x - \xi_n))$, in essence a sort of removable singularity \cite{Driscoll_flat_limit,RBF_book_Fornberg_Flyer}.

This has spurred a quest for algorithms to analytically remove the ill-conditioning of linear systems associated with small shape parameter values. For instance, the Contour-Pad\'e method extends $\varepsilon$ to the complex plane and makes use of numerical contour integration to stably compute the RBF interpolant \cite{FORNBERGPade}. The RBF-QR method induces a change of bases via a QR factorization, exposing the problematic terms and removing them (and with them the ill-conditioning) altogether \cite{RBFQR1, RBFQR2, RBFQR_GA_R}. Other such algorithms have been introduced, since keeping a small fixed shape parameter has often been the preferred strategy (see \cite{RBF_RA, FasshauerQR1, FasshauerQR2, RBF_book_Fornberg_Flyer}).

Indeed, it is observed that the linear scaling regime mentioned above often yields unstable methods, \cite{BOYD2010}; however some strategies have been introduced to attain accurate approximations in this regime \cite{Mazya,PiretFrames}. The remaining error is known as the stagnation or saturation error.
%Without stable algorithms, the delicate task of adjusting the shape parameter with $N$ to reduce the error while preserving stability is crucial. 
The lack of accuracy in the linear scaling regime can be avoided by using slower than linear growth such as $\varepsilon = c \sqrt{N}$, although the procedure is still unstable \cite{platte2005polynomials,Platte2011}. Alternatively, the ill-conditioning of global RBF approximation systems can be avoided altogether by considering local approximations of small size only, for example as part of the computation of a local stencil for a finite difference method. Finally, the approximation accuracy can be improved by augmenting the set of centered RBFs with other basis functions, such as polynomials~\cite{fasshauer2007meshfree}.

\subsection{Contribution of the paper}
We describe and analyze a numerical method which in the linear scaling regime $\varepsilon \sim cN$ is numerically stable for approximation on bounded domains. We fully quantify the setting for the specific choice of the Gaussian RBF (see \eqref{eq:gaussian} below), which is smooth and rapidly decaying as $r \to \infty$. The setting crucially relies on the combination of the following points:
\begin{itemize}
    \item accuracy in the linear scaling regime on bounded domains is improved by adding centers outside the domain,
    \item numerical stability of the computations is achieved by considering discrete least squares approximation instead of interpolation, i.e., by using more sample points than centers,
    \item in addition the sampling points may in general be different from the centers.
\end{itemize}
These points were inspired by the analysis of function approximation in the presence of redundancy using frames in \cite{fna1,fna2}.

These points have appeared in the literature before in the context of radial basis functions. Using least squares approximation on bounded domains with centers outside the domain was recently explored in \cite{frykland2018,hashemi2021}. Compared to these references, we analyze the (sub)linear scalings, provide justification for the scheme and identify explicit choices of the parameters involved.

\subsection{Methodology and results}
It follows from the analysis in \cite{fna1,fna2} that least squares approximations strike a balance between the approximation error and the size of the coefficients $\Vert \lambda \Vert$ (see \eqref{eq:ENeps} for the corresponding error bound). Thus, in order to analyze the accuracy of an approximation, one has to analyze these quantities simultaneously. The main theoretical results of the paper are formulated in Theorems~\ref{thm:accuracylimit} and \ref{thm:convergence}. The results are stated in terms of a user-chosen desired accuracy parameter $\tau \ll 1$, which is used to regularize the ill-conditioned linear system.

Theorem \ref{thm:accuracylimit} describes the limiting accuracy for $N \to \infty$. It is shown that the error may converge to zero in the sublinear scaling regime, while a linear scaling regime $\varepsilon \sim c N$ leads to a saturation error that depends on the choice of $c$. However, since the result is explicit, $c$ can be chosen to make the saturation error arbitrarily small, e.g., on the order of the small parameter $\tau$.

Theorem \ref{thm:convergence} describes the convergence rate for smooth functions. For a sublinear scaling regime, improving the approximation error comes at a cost of a rapidly increasing coefficient norm. Hence, as a consequence of ill-conditioning, the best approximations are not numerically computable. Taking both approximation error and coefficient norm into account, Theorem \ref{thm:accuracylimit} shows slower convergence in the sublinear scaling regime compared with the linear scaling regime. For the case of Gaussian RBF's, an optimal value of $c$ in the scaling $\varepsilon = c N$ is given explicitly by \eqref{eq:optimal_c}.

Broadly speaking, the methodology of the proof relies on the two points raised above in the following way. The centers outside the domain ensure that the saturation error can be made arbitrarily small. The oversampling ensures that good approximations with bounded coefficient norm can be found in practice, using function samples only and using inexact floating point computations.

\subsection{Overview of the paper}
We introduce the LS-RBF method in \S\ref{sec:ls-rbf}. The relevant results of \cite{fna1,fna2} are reviewed in \S\ref{sec:redundancy}. The main theoretical results for univariate problems using the Gaussian RBF are formulated in Theorems~\ref{thm:accuracylimit} and \ref{thm:convergence}. The theory is illustrated with a range of experiments in \S\ref{sec:experiments}.

\section{The LS-RBF method}
\label{sec:ls-rbf}

We now introduce the \textit{Least Squares Radial Basis Function (LS-RBF)} method. We distinguish between the centers $\xi_n$ of the RBF's, $n=1,\ldots,N$, and the sampling points $x_m \in \Omega$, $m=1,\ldots,M$. In the least squares approach, $M > N$. The sampling points belong to a domain $\Omega \subset \mathbb{R}^d$ on which the function $f$ to be approximated is defined. The centers may also be outside the domain.

The approximation to $f$ is
\begin{equation}
\label{eq:rbf}
\tilde f = \sum^{N}_{n=1} \lambda_n \phi(\varepsilon(\cdot - \xi_n)) = \sum^{N}_{n=1} \lambda_n \phi_n,
\end{equation}
where $\lambda = (\lambda_n)^{N}_{n=1}$ is given by the discrete least squares solution
\begin{equation}
\label{eq:leastsquaresproblem}
\lambda = \argmin{z \in \mathbb{R}^N} \| A z - b \|_{\ell^2},   
\end{equation}
with
\begin{equation}
\label{eq:matrix_and_rhs}
A = (\phi_n(x_m))^{M,N}_{m,n=1} \in \mathbb{R}^{M \times N},\qquad b = (f(x_m))^{M}_{m=1} \in \mathbb{R}^M.
\end{equation}
In our analysis we focus on the Gaussian RBF
\begin{equation}\label{eq:gaussian}
    \phi(r) = \E^{-r^2},
\end{equation}
but in the numerical experiments we also consider other RBF's.

Depending on the distribution of the centers, the samples and the shape parameter, the matrix $A$ may be ill-conditioned. In that case, we solve $Ax=b$ using a direct solver with regularization threshold $\tau$. In this paper we analyze the use of a truncated singular value decomposition, in which all singular values below $\tau$ are discarded. In practice, similar results are obtained using a rank-revealing QR decomposition with pivoting.

The solution using regularization with threshold $\tau$ is denoted by $\lambda^{(\tau)}$, in contrast with the exact solution $\lambda$ of the least squares problem \eqref{eq:leastsquaresproblem}. However, in the following we will frequently omit the superscript.

\begin{remark}\label{rem:normalization}
In the analysis further on we have normalized the radial basis functions, as well as the matrix $A$. We have applied these normalizations in our implementation too, hence we describe them here. Thus, in theory and in practice, we (i) replace $\phi(\epsilon(\cdot - \xi_n))$ in \eqref{eq:rbf} by $\sqrt{\epsilon} \phi(\epsilon(\cdot - \xi_n))$, and we (ii) multiply both $A$ and $b$ by $\sqrt{\frac{T}{M}}$. Normalization of the RBF's affects the size of $\| \lambda \|$, the relevance of which is motivated in \S\ref{sec:redundancy}. The normalization of $A$ and $b$ also affects our results because we use an absolute threshold $\tau$ in the regularization. The rationale of this scaling and its possible generalizations are elaborated on in Remark~\ref{rem:constants}.
\end{remark}

\section{Numerical approximations with redundancy}\label{sec:redundancy}

The ill-conditioning of the least squares matrix $A$ indicates that the RBF's may be close to linearly dependent, or in other words, nearly redundant.
The approximation of functions using redundancy was extensively analyzed in \cite{fna1,fna2}. We briefly review the main results, which will guide our analysis.

Define the \emph{synthesis operator} $\mathcal{T}_N : \mathbb{C}^N \rightarrow \mathrm{L^2(\Omega)}$ by
\begin{equation*}
    \mathcal{T}_N \lambda = \sum^{N}_{n=1} \lambda_n \phi_n.
\end{equation*}
That is, $\mathcal{T}_N$ associates with a set of coefficients the corresponding expansion in the RBF basis, which was defined in \eqref{eq:rbf}.

The solution vector $\lambda^{(\tau)}$ of the linear system $Ax=b$, with $A$ and $b$ given by \eqref{eq:matrix_and_rhs}, and using SVD regularization with threshold $\tau$, satisfies \cite[Theorem 1.3]{fna2}
\begin{equation}
    \label{eq:fna2_error}
 \Vert f - \mathcal{T}_N \lambda^{(\tau)} \Vert_{L^2(\Omega)} \leq \inf_{z \in \bbC^N} \{\Vert f - {\mathcal T}_N z \Vert_{L^2(\Omega)} + c_{M,N}^\tau \Vert f - \mathcal{T}_N z \Vert_{M}  + \tau \, d_{M,N}^\tau \Vert z \Vert_{l^2} \}. 
\end{equation}
Here, $c_{M,N}^\tau$ and $d_{M,N}^\tau$ are constants that depend on the other parameters of the RBF approximation problem ($M$, $N$ and $\tau$) and in addition on the specific choice of sampling points and centers. The quantity $\Vert \cdot \Vert_M$ is a discrete norm defined over the sample points $x_m$.  In particular, if the sample points $x_m$ fill the domain $\Omega$ quasi-uniformly as $M \rightarrow \infty$, one has that $c^{\tau}_{M,N},d^{\tau}_{M,N} \lesssim 1$ and $\nm{f - \cT_N z}_{M} \lesssim \nm{f - \cT_N z}_{L^2(\Omega)}$ for sufficiently large $M$.  Hence, under such conditions, the error of the least-squares approximation is determined by the quantity
\begin{equation}
\label{eq:ENeps}
E_{N,\tau}(f) = \inf_{z \in \bbC^N} \left \{ \nm{f - \mathcal{T}_N z }_{L^2(\Omega)} + \tau \nm{z}_{\ell^2} \right \}.
\end{equation}

Importantly, this term indicates that an accurate RBF approximation will be found if an accurate approximation exists with a coefficient vector $z$ with small discrete norm $\Vert z \Vert$. The numerically computed solution will be as accurate as any such vector $z$, regardless of the condition number of $A$, up to the threshold $\tau$.

In view of the generic error estimate \eqref{eq:ENeps}, in our analysis it is sufficient to show the existence of accurate RBF approximations with corresponding bounds on the sizes of their expansion coefficients.

\begin{remark}\label{rem:constants}
We elaborate briefly on the behaviour of the constants $c_{M,N}^\tau$ and $d_{M,N}^\tau$ in \eqref{eq:fna2_error} for increasing $M$, and their relation to the choice of sampling points, in order to better motivate the simpler form~\eqref{eq:ENeps}. It is shown in \cite[Proposition 3.10]{fna2} that these constants satisfy $c_{M,N}^\tau,d_{M,N}^\tau \leq \frac{1}{\sqrt{A'_{M,N}}}$ where
\[
 A'_{M,N} = \inf_{\substack{g \in \rH_N \\ \| g \|=1}} \| g \|^2_M,
\]
with in the setting of this paper $H_N = \mathrm{span}\{\phi_n\}_{n=1}^N$ and
\[
\| g \|^2_M = \frac{1}{M} \sum_{m=1}^M g(x_m) \overline{g(x_m)}.
\]
Ideally, one ensures that the discrete and continuous norms agree for large $M$:
\begin{equation}\label{eq:riemannsumcondition}
\| g \|^2_M \to \|g \|_{L^2(\Omega)}^2 \quad \mbox{or} \quad \frac{1}{M} \sum_{m=1}^M g(x_m) \overline{g(x_m)} \to \int_\Omega g(x) \overline{g(x)} {\rm d}x, \qquad M \to \infty.
\end{equation}
In that case both constants $c_{M,N}^\tau$ and $d_{M,N}^\tau$ simply tend to $1$. This is the case for all examples in this paper because we use equispaced points and \eqref{eq:riemannsumcondition} becomes a Riemann sum. For non-equispaced points, one may consider weighting each sampling point $x_m$ with a weight $\sqrt{w_m}$. In that case $\| g \|^2_M = \sum_{m=1}^M w_m g(x_m) \overline{g(x_m)}$, and a proper choice of weights can ensure that a weighted analogue of \eqref{eq:riemannsumcondition} holds.
\end{remark}

\begin{remark}
The previous remark pertains to the oversampling limit $M \to \infty$. For fixed $M$ and $N$, \cite[Proposition 3.10]{fna2} also shows that $c_{M,N}^\tau, d_{M,N}^\tau \lesssim \frac{1}{\varepsilon}$. For $M$ close to $N$, this bound may be sharp and both constants may be large. Thus, without `sufficient' oversampling \eqref{eq:fna2_error} does not demonstrate accuracy of the approximation. In particular, there is no reason to assume that an RBF interpolant ($M=N$), rather than a least squares approximation ($M > N$), would yield an accurate function approximation in general. We will show with numerical experiments that $M \sim \gamma N$ for some $\gamma >1$ appears to be sufficient in our setting and we generally choose $\gamma=2$. It remains an open problem to prove that such linear oversampling is indeed sufficient.
\end{remark}

\section{Analysis in the one-dimensional case for Gaussian RBFs}

In this section, we study the behaviour of \eqref{eq:ENeps} for the Gaussian RBF \eqref{eq:gaussian}. For simplicity, we consider the one-dimensional case, where $\Omega \subset (-T,T)$ is the domain.  Our analysis relies crucially on the radius of $\Omega$, defined as
\begin{equation}
\label{bOmegadef}
B = \max_{x \in \Omega} |x|,
\end{equation}
Note that $B < T$ because $\Omega$ is a subset of $[-T,T]$. 
We also consider equally-spaced centers on $[-T,T]$ given by
\begin{equation*}
\xi_n = \frac{n T}{N},\quad n = -N,\ldots,N.
\end{equation*}
For convenience, we assume the approximation uses these $2N+1$ centers (as opposed to $N$ in \eqref{eq:rbf}), indexed from $-N$ to $N$.  That is,
\begin{equation}\label{rbf_sum_equispaced}
f \approx \cT_N \lambda = \sum^{N}_{n=-N} \lambda_n \sqrt{\varepsilon} \phi(\varepsilon(\cdot - n T/N)).
\end{equation}
Note that we now also normalize the RBF functions by a factor $\sqrt{\varepsilon}$, so that the $L^2$-norms of the corresponding terms in the sum are independent of $\varepsilon$.

We define the Fourier transform of $f \in L^2(\bbR)$ by
\begin{equation}
\hat{f}(\omega) = \int^{\infty}_{-\infty} f(x) \E^{-\I \omega x} \D x,\qquad \omega \in \bbR,
\end{equation}
with inverse
\begin{equation}
    \check{g}(x) = \frac{1}{2\pi} \int^{\infty}_{-\infty} g(\omega) \E^{\I \omega x} \D \omega,\qquad x \in \bbR,
\end{equation}
for $g \in L^2(\bbR)$.  We also recall Plancherel's formula
\begin{equation*}
\Vert \hat{f} \Vert_{L^2(\mathbb{R})}  = \sqrt{2 \pi} \Vert f \Vert_{L^2(\mathbb{R})}.
\end{equation*}

\subsection{Preliminary results}

Our main results are contained in \S \ref{ss:limiting} and \S \ref{ss:smooth}.  First, in this subsection, we state and prove a number of preliminary results.

\begin{lemma}
\label{lem:any_g_and_any_lambda}
Let $f \in L^2(\Omega)$, let $g \in L^2(\bbR)$ be any extension of $f$ to the real line, and let $\lambda \in \ell^2(\bbZ)$ be arbitrary.  Then
\begin{equation*}
E_{N,\tau}(f) \leq \nm{g - h}_{L^2(\bbR)}+ \left ( \sqrt{\frac{\sqrt{2 \pi} B N}{T} \mathrm{erfc}(\sqrt{2} \varepsilon(T-B))} + \tau \right ) \nm{\lambda}_{\ell^2},
\end{equation*}
where $E_{N,\tau}(f)$ is as in~\eqref{eq:ENeps}, $h = \sum^{\infty}_{n=-\infty} \lambda_n \sqrt{\varepsilon} \phi(\varepsilon(\cdot - n T/N) ) \in L^2(\bbR)$ and $\mathrm{erfc}$ is the complementary error function. In particular, if
\begin{equation}
\label{eps_min_simplify}
\varepsilon \geq  \frac{1}{\sqrt{2}(T-B)} \sqrt{\log\left (\frac{\sqrt{2 \pi} B N}{T \tau^2} \right ) } ,
\end{equation}
then
\begin{equation*}
E_{N,\tau}(f) \leq \nm{g - h}_{L^2(\bbR)} + 2 \tau \nm{\lambda}_{\ell^2}.
\end{equation*}
\end{lemma}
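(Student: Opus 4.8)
The plan is to exhibit a single competitor $z$ in the infimum defining $E_{N,\tau}(f)$ and then bound the two terms of~\eqref{eq:ENeps} separately. The natural choice is the truncation of $\lambda$ to the available centers: set $z_n = \lambda_n$ for $|n| \le N$ and discard the rest. Then $\nm{z}_{\ell^2} \le \nm{\lambda}_{\ell^2}$, which disposes of the coefficient term at once, and $\cT_N z = \sum_{|n|\le N}\lambda_n \sqrt{\varepsilon}\,\phi(\varepsilon(\cdot - nT/N))$, so that $h - \cT_N z = \sum_{|n|>N}\lambda_n \sqrt{\varepsilon}\,\phi(\varepsilon(\cdot - nT/N))$ is exactly the tail of $h$ supported on the centers lying outside $[-T,T]$.

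For the approximation term I would use that $g$ agrees with $f$ on $\Omega$, write $f - \cT_N z = (g-h) + (h - \cT_N z)$ on $\Omega$, and apply the triangle inequality together with $\nm{g-h}_{L^2(\Omega)} \le \nm{g-h}_{L^2(\bbR)}$. This reduces the whole lemma to bounding $\nm{h - \cT_N z}_{L^2(\Omega)}$, i.e.\ the $L^2(\Omega)$-norm of the tail, by $\sqrt{\tfrac{\sqrt{2\pi}BN}{T}\,\mathrm{erfc}(\sqrt2\,\varepsilon(T-B))}\,\nm{\lambda}_{\ell^2}$.

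This tail estimate is the heart of the argument. The strategy is to apply Cauchy--Schwarz in $n$ pointwise in $x$, factoring out the tail energy (bounded by $\nm{\lambda}_{\ell^2}^2$) and leaving $\int_\Omega \sum_{|n|>N}\varepsilon\, \E^{-2\varepsilon^2(x-\xi_n)^2}\D x$; one then exploits $\Omega \subset [-B,B]$ together with $|\xi_n|\ge T > B$ for $|n|>N$. For such $x$ and centers the Gaussian $\E^{-2\varepsilon^2(x-\xi)^2}$ is monotone in $\xi$, so the sum over the equispaced centers (spacing $T/N$) is dominated by $\tfrac{N}{T}$ times the corresponding integral over $\{|\xi|\ge T\}$; a substitution reduces the resulting double integral to a one-dimensional Gaussian tail integral equal to $\tfrac{\sqrt\pi}{2\sqrt2\,\varepsilon}\mathrm{erfc}(\sqrt2\,\varepsilon(T-B))$, and collecting the constants yields the claimed prefactor $\sqrt{2\pi}BN/T$. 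The delicate point, and the main obstacle, is precisely this sum-to-integral comparison: one must orient the Riemann-sum inequality so that the \emph{decreasing} Gaussian tail produces an \emph{upper} bound carrying the correct spacing factor $T/N$, and then track every constant so that they assemble into the stated prefactor.

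Finally, the ``in particular'' statement follows by forcing the erfc prefactor below $\tau$. Using the elementary inequality $\mathrm{erfc}(y) \le \E^{-y^2}$ for $y \ge 0$ with $y = \sqrt2\,\varepsilon(T-B)$, the requirement $\tfrac{\sqrt{2\pi}BN}{T}\mathrm{erfc}(\sqrt2\,\varepsilon(T-B)) \le \tau^2$ is implied by $\E^{-2\varepsilon^2(T-B)^2} \le \tfrac{T\tau^2}{\sqrt{2\pi}BN}$, which upon taking logarithms is exactly the condition~\eqref{eps_min_simplify}. Under that condition the first summand in the bracket is at most $\tau$, so the bracket is at most $2\tau$ and the second inequality follows.
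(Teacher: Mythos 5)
Your proposal is correct and follows essentially the same route as the paper: the same truncated competitor $z=(\lambda_n)_{|n|\le N}$, the same triangle-inequality decomposition through $g-h$ and the tail $h-\cT_N z$, the same Cauchy--Schwarz plus sum-to-integral comparison yielding the prefactor $\sqrt{\tfrac{\sqrt{2\pi}BN}{T}\,\mathrm{erfc}(\sqrt{2}\varepsilon(T-B))}$ (the paper applies Cauchy--Schwarz to the sum of $L^2(\Omega)$-norms of the individual translates rather than pointwise in $x$, but the constants assemble identically), and the same use of $\mathrm{erfc}(y)\le \E^{-y^2}$ for the ``in particular'' part. The only piece you omit is the verification that the infinite series defining $h$ converges in $L^2(\bbR)$ for arbitrary $\lambda\in\ell^2(\bbZ)$ --- the paper devotes the first half of its proof to this, via the Fourier-domain Bessel-sequence criterion for the shifted Gaussians --- and without it the quantity $\nm{g-h}_{L^2(\bbR)}$ in the asserted bound is not known to be finite, so you should add that (standard) step.
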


The motivation behind this lemma is the following.  Analyzing $E_{N,\tau}(f)$ directly requires constructing a finite series $\sum^{N}_{n=-N} \lambda_n \phi_n$ which approximates $f$ over $\Omega$ and whose coefficients $\lambda_n$ do not grow too large.  This can be difficult, due to the requirement that the series be finite and the fact that the domain $\Omega$ is not the whole real line.  Instead, this lemma states that it is sufficient to study how well any \emph{extension} $g \in L^2(\mathbb{R})$ of $f$ can be approximated over the whole real line via an \emph{infinite} series whose terms are not too large.  As we shall see later, by moving to infinite series on the real line we are able to employ Fourier analysis techniques.

\begin{proof}[Proof of Lemma~\ref{lem:any_g_and_any_lambda}]
We first show that the function $h \in L^2(\bbR)$.
Let $y = N x / T$ and $\psi(y) = \phi(\varepsilon T y / N)$.  Then $h \in L^2(\bbR)$ if and only if the function $\sum^{\infty}_{n=-\infty} \lambda_n \psi(\cdot - n) \in L^2(\bbR)$ for $\lambda \in \ell^2(\bbZ)$.  By Lemma 9.2.2 and Theorem 9.2.5 of \cite{christensen2016introduction}, this holds provided
\begin{equation}
\label{christensentest}
\sum_{k \in \bbZ} | \hat{\psi}(2 \pi (\omega + k)) |^2 \leq C,\quad a.e.\ \omega \in [0,1], 
\end{equation}
for some constant $C > 0$.
Observe that
\begin{equation*}
\sum_{k \in \bbZ} | \hat{\psi}(2 \pi (\omega + k)) |^2  = \frac{N^2}{\varepsilon^2 T^2} \sum_{k \in \bbZ} | \hat{\phi}(2 \pi N (\omega+k) / (\varepsilon T) ) |^2.    
\end{equation*}
Since $\phi$ is the Gaussian, we have
\begin{equation*}
\hat{\phi}(\omega) = \sqrt{\pi} \E^{-\omega^2/4}.
\end{equation*}
and therefore
\begin{align*}
\sum_{k \in \bbZ} | \hat{\psi}(2 \pi (\omega + k)) |^2 = \frac{N^2 \pi }{\varepsilon^2 T^2} \sum_{k \in \bbZ} \exp(-2 \pi^2 N^2 (\omega+k)^2/(\varepsilon^2 T^2) ).
\end{align*}
This sum is clearly uniformly bounded in $\omega$.  Hence \eqref{christensentest} holds for some suitable $C$.  We conclude that $h \in L^2(\mathbb{R})$.

With this in hand, we let $z = (\lambda_n)^{N}_{n=-N}$ in \eqref{eq:ENeps} to obtain
\begin{align*}
E_{N,\tau}(f) &\leq \nm{f - h}_{L^2(\Omega)} + \nm{h - \cT_N z}_{L^2(\Omega)} + \tau \nm{\lambda}_{\ell^2}
\\
& \leq \nm{g - h}_{L^2(\bbR)} + \sum_{|n| > N} |\lambda_n| \sqrt{\varepsilon}\nm{\phi(\varepsilon(\cdot - n T/N))}_{L^2(\Omega)} + \tau \nm{\lambda}_{\ell^2}.
\end{align*}
Consider the middle term.  By \eqref{bOmegadef}, we have
\begin{align*}
 \nm{\phi(\varepsilon(\cdot - n T/N))}_{L^2(\Omega)} \leq \sqrt{2 B} \E^{-\varepsilon^2(B - |n| T/N)^2}.
\end{align*}
Therefore
\begin{align*}
\sum_{|n| > N} |\lambda_n| \sqrt{\varepsilon}\nm{\phi(\varepsilon(\cdot - n T/N))}_{L^2(\Omega)} & \leq \sqrt{4 B} \nm{\lambda}_{\ell^2} \sqrt{\varepsilon} \sqrt{\sum_{n > N} \E^{-2\varepsilon^2(n T/N - B)^2} }
\\
& \leq \sqrt{4 B} \nm{\lambda}_{\ell^2} \sqrt{\varepsilon}\sqrt{\int^{\infty}_{N} \E^{-2 \varepsilon^2 (x T /N - B)^2} \D x }
\\
& = \sqrt{4 B} \nm{\lambda}_{\ell^2} \sqrt{\varepsilon}\sqrt{\frac{N}{\sqrt{2} T \varepsilon } \int^{\infty}_{\sqrt{2} \varepsilon(T -b)} \E^{-y^2} \D y }
\\
& = \sqrt{\frac{\sqrt{2 \pi} B N}{T} \mathrm{erfc}(\sqrt{2} \varepsilon(T-B))} \nm{\lambda}_{\ell^2}. 
\end{align*}
This completes the proof of the first result.

For the second result, observe that $\mathrm{erfc}(z) \leq \E^{-z^2}$.  This follows from \cite[Eqn.\ (7.8.2)]{NIST:DLMF}, noting that $1/(x+\sqrt{x^2+4/\pi}) < 1$ for $x > 0$. Hence, due to condition~\eqref{eps_min_simplify} on $\varepsilon$,
\begin{equation*}
\frac{\sqrt{2 \pi} B N}{T} \mathrm{erfc}(\sqrt{2} \varepsilon(T-B)) \leq \frac{\sqrt{2 \pi} B N}{T} \exp(-2 \varepsilon^2(T-B)^2) \leq \tau^2.     
\end{equation*}
The result now follows immediately from the first part.
\end{proof}

This lemma states that when $\varepsilon$ grows mildly with $N$, we reduce the problem of estimating $E_{N,\tau}(f)$ to that of studying the approximation of a function $g \in L^2(\bbR)$ via the corresponding infinite expansion $h$ in the RBF system.  Note that the growth condition \eqref{eps_min_simplify} is primarily for convenience and it is not a fundamental restriction in the analysis: it merely allows us to simplify the bulky expression in the first part of the lemma, which would otherwise propagate into all bounds later on. The condition will be satisfied in all examples considered later.

With this in hand, to estimate $E_{N,\tau}(f)$ we now construct a suitable function $h$ via Fourier analysis.  This is the topic of the following two lemmas.  Specifically, in view of Plancherel's identity, we pick $h$ so that $\hat{h}$ agrees with $\hat{f}$ for all low frequencies.

\begin{lemma}
\label{lem:ENtau_main_bd_general}
Let $f \in L^2(\Omega)$ and let $g \in L^2(\bbR)$ be any extension of $f$ to the real line.  If $\varepsilon$ satisfies \eqref{eps_min_simplify}
then, for any $K \leq N$,
\begin{align*}
E_{N,\tau}(f) \lesssim & \sqrt{\int_{|\omega| > K \pi/T} | \hat{g}(\omega) |^2 \D \omega} 
\\
&+ \left( \frac{1}{\sqrt{\E^{N^2 \pi^2 / (2 T^2 \varepsilon^2)}-1}}+ \sqrt{\frac{T \varepsilon}{N}} \tau \right ) \sqrt{\int_{|\omega| \leq K \pi/T} \frac{| \hat{g}(\omega) |^2 }{| \hat{\phi}(\omega/\varepsilon) |^2} \D \omega}.
\end{align*}
\end{lemma}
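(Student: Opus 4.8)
The plan is to feed Lemma~\ref{lem:any_g_and_any_lambda} with an explicit coefficient sequence $\lambda$ built in the Fourier domain. Since $\varepsilon$ satisfies \eqref{eps_min_simplify}, that lemma already gives $E_{N,\tau}(f) \leq \nm{g-h}_{L^2(\bbR)} + 2\tau\nm{\lambda}_{\ell^2}$ for \emph{any} $\lambda\in\ell^2(\bbZ)$, where $h = \sum_{n} \lambda_n \sqrt{\varepsilon}\phi(\varepsilon(\cdot - nT/N))$; so it suffices to exhibit one $\lambda$ for which the two terms are controlled by the stated integrals. I would encode $\lambda$ through its symbol $\Lambda(\omega) = \sum_n \lambda_n \E^{-\I\omega nT/N}$, a $P$-periodic function with $P = 2\pi N/T$ whose Fourier coefficients are exactly the $\lambda_n$. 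The translation structure of $h$ gives $\hat{h}(\omega) = \varepsilon^{-1/2}\hat{\phi}(\omega/\varepsilon)\,\Lambda(\omega)$, so to make $\hat h$ match $\hat g$ on the low band I set, on the fundamental period $|\omega|\leq \pi N/T = P/2$,
\[
\Lambda(\omega) = \sqrt{\varepsilon}\,\frac{\hat g(\omega)}{\hat\phi(\omega/\varepsilon)} \ \ \text{for } |\omega|\leq K\pi/T, \qquad \Lambda(\omega) = 0 \ \ \text{for } K\pi/T < |\omega| \leq P/2,
\]
and extend periodically. Because $K\leq N$, the band $|\omega|\leq K\pi/T$ lies inside one period, so this is well defined; and since $1/|\hat\phi(\omega/\varepsilon)|^2 = \frac1\pi\E^{\omega^2/(2\varepsilon^2)}$ is bounded on the compact band, finiteness of the weighted integral guarantees $\Lambda\in L^2(\text{period})$, i.e.\ $\lambda\in\ell^2(\bbZ)$.

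Next I would compute the two quantities by Plancherel. Parseval for the periodic symbol yields $\nm{\lambda}_{\ell^2}^2 = \tfrac{1}{P}\int_{-P/2}^{P/2}|\Lambda|^2 = \tfrac{T\varepsilon}{2\pi N}\int_{|\omega|\leq K\pi/T} |\hat g(\omega)|^2/|\hat\phi(\omega/\varepsilon)|^2\,\D\omega$, which is exactly the factor $\sqrt{T\varepsilon/N}\,\tau$ times the weighted integral (the constant $1/\sqrt{2\pi}$ is absorbed into $\lesssim$). For the residual, $\nm{g-h}_{L^2(\bbR)}^2 = \tfrac{1}{2\pi}\int_{\bbR}|\hat g - \hat h|^2$ splits into three ranges: on $|\omega|\leq K\pi/T$ the integrand vanishes by construction; on $K\pi/T < |\omega|\leq P/2$ we have $\hat h = 0$, contributing $\int|\hat g|^2$; and on $|\omega|>P/2$ the periodic copies of $\Lambda$ reactivate. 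A triangle inequality on the last range splits off a $(\int_{|\omega|>P/2}|\hat g|^2)^{1/2}$ piece, which combines with the middle range to produce the first term $\sqrt{\int_{|\omega|>K\pi/T}|\hat g|^2}$, leaving only the aliasing integral $\int_{|\omega|>P/2}|\hat h|^2$.

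The crux is this aliasing integral. On the $k$-th copy I would substitute $\nu = \omega - kP$ (so $|\nu|\leq K\pi/T$) and use the Gaussian identity $|\hat\phi(\nu/\varepsilon)|^2\exp(-[(\nu+kP)^2-\nu^2]/(2\varepsilon^2)) = \pi\exp(-(\nu+kP)^2/(2\varepsilon^2))$ to expose the weight, turning the integral into
\[
\int_{|\omega|>P/2}|\hat h|^2 = \pi \int_{|\nu|\leq K\pi/T} \frac{|\hat g(\nu)|^2}{|\hat\phi(\nu/\varepsilon)|^2}\Big[\textstyle\sum_{k\neq 0}\E^{-(\nu+kP)^2/(2\varepsilon^2)}\Big]\D\nu .
\]
It then remains to bound the bracketed theta-type sum uniformly in $\nu$, and this is the main obstacle: a crude bound at the band edge $|\nu|=P/2$ is too lossy to yield the sharp prefactor. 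Instead, since $K\leq N$ forces $|\nu|\leq P/2$, each term obeys $|\nu+kP|\geq(|k|-\tfrac12)P$, so the sum is dominated by a geometric series in $r = \E^{-\pi^2 N^2/(2T^2\varepsilon^2)}$ and is bounded by a fixed constant times $r/(1-r) = 1/(\E^{N^2\pi^2/(2T^2\varepsilon^2)}-1)$, uniformly in $\nu$. Pulling this constant out of the integral gives the aliasing term with exactly the stated prefactor $1/\sqrt{\E^{N^2\pi^2/(2T^2\varepsilon^2)}-1}$ on the weighted integral. Substituting the residual and coefficient bounds back into Lemma~\ref{lem:any_g_and_any_lambda} and absorbing the bounded constants into $\lesssim$ then gives the claim.
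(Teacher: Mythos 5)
Your proposal is correct and follows essentially the same route as the paper: your periodic symbol $\Lambda$ is exactly the paper's function $F$, the Parseval computation of $\nm{\lambda}_{\ell^2}$ is identical, and your uniform bound on the theta-type aliasing sum is just a reorganization of the paper's band-by-band bound on $|\hat{\phi}(\omega/\varepsilon)|^2$ followed by the same geometric-series estimate yielding the prefactor $1/\sqrt{\E^{N^2\pi^2/(2T^2\varepsilon^2)}-1}$. The only cosmetic difference is that you split the high-frequency residual into the two ranges $K\pi/T<|\omega|\leq N\pi/T$ and $|\omega|>N\pi/T$ before applying the triangle inequality, whereas the paper treats $|\omega|>K\pi/T$ in one step.
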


\begin{proof}
Consider the function $F : [-N \pi/T,N \pi/T] \rightarrow \bbC$ defined by
\begin{equation*}
F(\omega) = \frac{\sqrt{\varepsilon}\hat{g}(\omega)}{ \hat{\phi}(\omega / \varepsilon)} \bbI_{[-K \pi/T,K \pi/T]}(\omega).
\end{equation*}
Note that $F \in L^2([-N\pi/T,N \pi/T])$ since $\hat{g} \in L^2(\bbR)$ and $\hat{\phi}$ is strictly positive and bounded.  Hence $F$ has the Fourier series
\begin{equation*}
F(\cdot) =  \sum^{\infty}_{n = -\infty} \lambda_n \E^{-\I n T \cdot /N } ,
\end{equation*}
which converges in $L^2([-N \pi/T,N \pi/T])$ with coefficients $\lambda \in \ell^2(\bbZ)$ satisfying, by Parseval's identity,
\begin{equation}
\label{lambda_norm_bd}
\nm{\lambda}^2_{\ell^2} = \frac{T}{2 N \pi} \nm{F}^2_{L^2([-N \pi/T,N\pi/T])} = \frac{T\varepsilon}{2 N \pi } \int_{|\omega| \leq K \pi/T} \frac{ | \hat{g}(\omega) |^2 }{| \hat{\phi}(\omega/\varepsilon) |^2} \D \omega . 
\end{equation}
Let $h = \sum^{\infty}_{n=-\infty} \lambda_n \sqrt{\varepsilon} \phi(\varepsilon(\cdot - n T/N))$ and notice that $h \in L^2(\bbR)$ by Lemma~\ref{lem:any_g_and_any_lambda}.  Observe that
\begin{equation*}
\hat{h}(\omega) = \frac{\hat{\phi}(\omega/\varepsilon)}{\sqrt{\varepsilon}} \sum^{\infty}_{n=-\infty} \lambda_n \E^{-\I \omega n T/N}.
\end{equation*}
In particular,
\begin{equation*}
\hat{h}(\omega) = \hat{g}(\omega) \bbI_{[-K \pi/T,K \pi/T]}(\omega),\qquad |\omega | \leq N \pi/T.
\end{equation*}
Hence
\begin{align*}
\nm{g - h}_{L^2(\bbR)} &= \frac{1}{\sqrt{2 \pi}} \nmu{\hat{g} - \hat{h}}_{L^2(\bbR)} 
\\
& = \frac{1}{\sqrt{2 \pi}} \sqrt{\int_{|\omega| > K \pi/T} | \hat{g}(\omega) - \hat{h}(\omega) |^2 \D \omega } .
\\
& \leq \frac{1}{\sqrt{2 \pi}} \left( \sqrt{\int_{|\omega| > K \pi/T} | \hat{g}(\omega) |^2 \D \omega} + \sqrt{\int_{|\omega| > N \pi/T} | \hat{h}(\omega) |^2 \D \omega} \right).
\end{align*}
Consider the second term.  We have
\begin{align*}
\int_{|\omega| > N \pi/T} | \hat{h}(\omega) |^2 \D \omega &= \sum_{l \neq 0} \int^{(2l+1) N \pi/T}_{(2l-1) N \pi/T} \varepsilon^{-1} | \hat{\phi}(\omega/\varepsilon) |^2 | F(\omega) |^2 \D \omega
\\
& \leq 2 \pi \nm{F}^2_{L^2([-N \pi/T,N\pi/T])} \sum^{\infty}_{l=1} \varepsilon^{-1} \E^{-(2l-1)^2 N^2 \pi^2 / (2 T^2 \varepsilon^2)} .
\end{align*}
Replacing $(2l-1)^2$ by $l$ and summing the resulting geometric series gives
\begin{equation*}
\int_{|\omega| > N \pi/T} | \hat{h}(\omega) |^2 \D \omega \lesssim \varepsilon^{-1} \nm{F}^2_{L^2([-N \pi/T,N\pi/T])} \frac{1}{\E^{N^2 \pi^2 / (2 T^2 \varepsilon^2)} -1}.
\end{equation*}
Hence, combining this with the previous estimate and \eqref{lambda_norm_bd}, we obtain
\begin{align*}
\nm{g - h}_{L^2(\bbR)} + \tau \nm{\lambda}_{\ell^2} \lesssim & \sqrt{\int_{|\omega| > K \pi/T} | \hat{g}(\omega) |^2 \D \omega} 
\\
&+ \left ( \sqrt{\frac{T}{N}} \tau + \frac{\epsilon^{-1/2}}{\sqrt{\E^{N^2 \pi^2 / (2 T^2 \varepsilon^2)} -1}} \right ) \nm{F}_{L^2([-N \pi/T,N\pi/T])} . 
\end{align*}
Hence the result now follows by applying Lemma~\ref{lem:any_g_and_any_lambda} once more.
\end{proof}

The previous lemma involves an additional parameter $K$.  In practice, we make the specific choice $K = \min \{N,\varepsilon\}$.  This leads to the following:

\begin{lemma}
\label{lem:ENtau_main_bd_setK}
Let $f \in L^2(\Omega)$, and let $g \in L^2(\bbR)$ be any extension of $f$ to the real line.  If $\varepsilon$ satisfies \eqref{eps_min_simplify} then
\begin{align*}
E_{N,\tau}(f) \lesssim & \sqrt{\int_{|\omega| > \min\{N,\varepsilon\} \pi/T} | \hat{g}(\omega) |^2 \D \omega} 
\\
&+ \left( \frac{1}{\sqrt{\E^{N^2 \pi^2 / (2 T^2 \varepsilon^2)}-1}}+ \sqrt{\frac{T \varepsilon}{N}} \tau \right ) \E^{\pi^2/(4 T^2)} \nm{g}_{L^2(\bbR)}.
\end{align*}
\end{lemma}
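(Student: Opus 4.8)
The plan is to obtain Lemma~\ref{lem:ENtau_main_bd_setK} as a direct specialization of Lemma~\ref{lem:ENtau_main_bd_general} by fixing the free parameter $K$. I would set $K = \min\{N,\varepsilon\}$, which is admissible since $\min\{N,\varepsilon\} \leq N$. With this choice the first term of the bound in Lemma~\ref{lem:ENtau_main_bd_general} becomes verbatim the first term in the claimed inequality, and the coefficient multiplying the second term is left untouched. Thus the entire content reduces to bounding the remaining factor $\sqrt{\int_{|\omega| \leq K\pi/T} |\hat{g}(\omega)|^2 / |\hat{\phi}(\omega/\varepsilon)|^2 \D \omega}$ by $\E^{\pi^2/(4T^2)} \nm{g}_{L^2(\bbR)}$.

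First I would exploit the explicit Gaussian transform $\hat{\phi}(\omega) = \sqrt{\pi}\,\E^{-\omega^2/4}$ already recorded in the proof of Lemma~\ref{lem:any_g_and_any_lambda}, which gives $|\hat{\phi}(\omega/\varepsilon)|^{-2} = \pi^{-1} \E^{\omega^2/(2\varepsilon^2)}$. The crucial observation is that on the domain of integration $|\omega| \leq K\pi/T$ we have $K \leq \varepsilon$; this is precisely why the choice $K = \min\{N,\varepsilon\}$ rather than $K = N$ is made. Consequently $|\omega|/\varepsilon \leq \pi/T$ and hence $\omega^2/(2\varepsilon^2) \leq \pi^2/(2T^2)$ uniformly on this region, so the otherwise exponentially growing reciprocal weight is bounded above by the constant $\pi^{-1}\E^{\pi^2/(2T^2)}$ throughout.

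Next I would pull this constant out of the integral, enlarge the integration domain from $|\omega| \leq K\pi/T$ to all of $\bbR$ (which only increases the integral), and apply Plancherel's identity $\nm{\hat{g}}_{L^2(\bbR)} = \sqrt{2\pi}\,\nm{g}_{L^2(\bbR)}$. Taking square roots converts the weight bound $\pi^{-1}\E^{\pi^2/(2T^2)}$ into $\pi^{-1/2}\E^{\pi^2/(4T^2)}$, and the Plancherel factor contributes $\sqrt{2\pi}$; the product $\sqrt{2\pi}\cdot\pi^{-1/2} = \sqrt{2}$ is an absolute constant that is absorbed into the $\lesssim$, leaving precisely $\E^{\pi^2/(4T^2)} \nm{g}_{L^2(\bbR)}$. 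Substituting this back into the bound of Lemma~\ref{lem:ENtau_main_bd_general} then yields the claim.

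I do not anticipate any genuine obstacle: the estimate is essentially bookkeeping on top of the preceding lemma. The one conceptual point worth flagging is that the choice $K = \min\{N,\varepsilon\}$ is exactly what forces $|\omega/\varepsilon| \leq \pi/T$ on the integration region and thereby tames the factor $\E^{\omega^2/(2\varepsilon^2)}$; the only remaining care is to verify that the multiplicative constants collapse to $\E^{\pi^2/(4T^2)}$ up to the absolute constant hidden in $\lesssim$.
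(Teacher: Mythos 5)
Your proposal is correct and follows essentially the same route as the paper: specialize Lemma~\ref{lem:ENtau_main_bd_general} to $K = \min\{N,\varepsilon\}$, use $K \le \varepsilon$ to bound the Gaussian reciprocal weight $|\hat{\phi}(\omega/\varepsilon)|^{-2}$ by the constant $\pi^{-1}\E^{\pi^2/(2T^2)}$ on the integration region, and then enlarge the domain and apply Plancherel. The only cosmetic difference is that the paper records the intermediate bound as $\E^{K^2\pi^2/(2T^2\varepsilon^2)}\nm{g}^2_{L^2(\bbR)}$ before invoking $K^2/\varepsilon^2 \le 1$, whereas you bound the exponent pointwise from the start.
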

\begin{proof}
We use Lemma~\ref{lem:ENtau_main_bd_general} with $K = \min\{N,\varepsilon\}$.  Observe that
\begin{align*}
\int_{|\omega| \leq K \pi/T} \frac{| \hat{g}(\omega) |^2 }{| \hat{\phi}(\omega/\varepsilon) |^2} \D \omega \lesssim \E^{K^2 \pi^2 / (2 T^2 \varepsilon^2)} \nm{g}^2_{L^2(\bbR)}.    
\end{align*}
The term $K^2 / \varepsilon^2 \leq 1$.  Hence 
\begin{equation*}
\int_{|\omega| \leq K \pi/T} \frac{| \hat{g}(\omega) |^2 }{| \hat{\phi}(\omega/\varepsilon) |^2} \D \omega \lesssim \E^{\pi^2 / (2 T^2)} \nm{g}^2_{L^2(\bbR)} .    
\end{equation*}
The result now follows from Lemma~\ref{lem:ENtau_main_bd_general}.
\end{proof}

\subsection{Limiting accuracy}\label{ss:limiting}
In this and the next section we analyze $E_{N,\tau}(f)$.  First, we consider the limiting behaviour as $N \rightarrow \infty$ for different choices of the parameter $\varepsilon$:

\begin{theorem}\label{thm:accuracylimit}
Let $f \in L^2(\Omega)$.  Then:
\begin{enumerate}
    \item If $\varepsilon \sim c N^{\alpha}$ for some $0 < \alpha < 1$ and $c > 0$ as $N \rightarrow \infty$ then
\begin{equation*}
E_{N,\tau}(f) \lesssim \tau \sqrt{T \epsilon / N} \E^{\pi^2/(4 T^2)} \nm{f}_{L^2(\Omega)} + o(1),\quad N \rightarrow \infty.
\end{equation*}
In particular, $E_{N,\tau}(f) \rightarrow 0$ as $N \rightarrow \infty$.
 \item If $\varepsilon \sim c N$ as $N \rightarrow \infty$ for some $c > 0$ then
 \begin{equation}\label{eq:linearscaling_limitingaccuracy}
E_{N,\tau}(f) \lesssim \left ( \frac{1}{\sqrt{\E^{\pi^2 / (2 c^2 T^2)} - 1}} + \sqrt{c T} \tau \right ) \E^{\pi^2 / (4 T^2)} \nm{f}_{L^2(\Omega)} + o(1).
\end{equation}
In particular, if $\tau < 1/2$ and $c$ satisfies
\begin{align}
\label{cT_choice_limit}
c T \leq \frac{\pi}{\sqrt{2 \log(1+\tau^{-2})}},  
\end{align}
then, as $N \rightarrow \infty$,
 \begin{equation*}
E_{N,\tau}(f) \lesssim \tau \E^{\pi^2 / (4 T^2)} \nm{f}_{L^2(\bbR)} + o(1).
\end{equation*}
\end{enumerate}
\end{theorem}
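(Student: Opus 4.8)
The plan is to apply Lemma~\ref{lem:ENtau_main_bd_setK} with the specific choice of extension $g$ equal to $f$ on $\Omega$ and zero elsewhere, so that $\nm{g}_{L^2(\bbR)} = \nm{f}_{L^2(\Omega)}$ and the right-hand side of the lemma is expressed purely in terms of $f$. Before doing so I would note that the hypothesis \eqref{eps_min_simplify} holds for all sufficiently large $N$ in both regimes: its right-hand side grows only like $\sqrt{\log N}$, whereas $\varepsilon \sim c N^\alpha$ (resp.\ $\varepsilon \sim cN$) grows polynomially, so the condition is eventually satisfied and the finitely many excluded values of $N$ are harmless for an $N \to \infty$ statement. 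The lemma then controls $E_{N,\tau}(f)$ by three pieces: a Fourier tail $\sqrt{\int_{|\omega| > \min\{N,\varepsilon\}\pi/T} |\hat{g}(\omega)|^2 \D\omega}$, an exponential term with prefactor $(\E^{N^2\pi^2/(2T^2\varepsilon^2)}-1)^{-1/2}$, and the sampling term $\sqrt{T\varepsilon/N}\,\tau$, the latter two multiplied by $\E^{\pi^2/(4T^2)}\nm{f}_{L^2(\Omega)}$. Since $\hat{g} \in L^2(\bbR)$ and $\min\{N,\varepsilon\} \to \infty$ in both regimes, the Fourier tail is $o(1)$; the whole argument then reduces to tracking the two remaining terms as $N \to \infty$.

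For the sublinear case $\varepsilon \sim cN^\alpha$ with $0 < \alpha < 1$, I would observe that $N^2/\varepsilon^2 \sim c^{-2} N^{2-2\alpha} \to \infty$, so $\E^{N^2\pi^2/(2T^2\varepsilon^2)} \to \infty$ and the exponential term is also $o(1)$. What survives is exactly $\tau\sqrt{T\varepsilon/N}\,\E^{\pi^2/(4T^2)}\nm{f}_{L^2(\Omega)}$, which is the asserted bound; since $T\varepsilon/N \sim cT N^{\alpha - 1} \to 0$, this term itself tends to zero, giving $E_{N,\tau}(f) \to 0$.

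For the linear case $\varepsilon \sim cN$ the two surviving terms instead converge to nonzero constants: $N^2/\varepsilon^2 \to c^{-2}$ gives $(\E^{N^2\pi^2/(2T^2\varepsilon^2)}-1)^{-1/2} \to (\E^{\pi^2/(2c^2T^2)}-1)^{-1/2}$, while $T\varepsilon/N \to cT$ gives the sampling term the limit $\sqrt{cT}\,\tau$. Collecting these yields \eqref{eq:linearscaling_limitingaccuracy}. For the sharpened statement I would then exploit the choice \eqref{cT_choice_limit}: rearranging it gives $\pi^2/(2c^2T^2) \geq \log(1+\tau^{-2})$, hence $\E^{\pi^2/(2c^2T^2)} - 1 \geq \tau^{-2}$ and therefore $(\E^{\pi^2/(2c^2T^2)}-1)^{-1/2} \leq \tau$; moreover, since $\tau < 1/2$ forces $\log(1+\tau^{-2}) > 1$ and thus $cT \leq \pi/\sqrt{2}$, the factor $\sqrt{cT}$ is bounded by an absolute constant, so $\sqrt{cT}\,\tau \lesssim \tau$. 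Combining these two bounds collapses the bracket in \eqref{eq:linearscaling_limitingaccuracy} to $\lesssim \tau$, which proves the final inequality.

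The computations are individually routine once Lemma~\ref{lem:ENtau_main_bd_setK} is in hand, so the only genuine subtlety is the algebraic step converting the condition \eqref{cT_choice_limit} on $cT$ into the estimate $(\E^{\pi^2/(2c^2T^2)}-1)^{-1/2} \leq \tau$; this is the step that actually pins down how small the shape-parameter constant $c$ must be chosen to drive the saturation error down to the target level $\tau$. The remaining care is purely bookkeeping: being sure that the Fourier tail and (in the sublinear case) the exponential term are genuinely $o(1)$, which follows from $\hat{g} \in L^2(\bbR)$ and from $N^2/\varepsilon^2 \to \infty$ respectively, and keeping the explicit constant $\E^{\pi^2/(4T^2)}$ outside all the limits.
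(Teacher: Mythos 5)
Your proposal is correct and follows essentially the same route as the paper: apply Lemma~\ref{lem:ENtau_main_bd_setK} to the zero extension of $f$, note that the Fourier tail is $o(1)$, and read off the asymptotics of the two remaining terms in each scaling regime. Your explicit derivation of the \emph{in particular} clause from \eqref{cT_choice_limit} (rearranging to $\E^{\pi^2/(2c^2T^2)}-1 \geq \tau^{-2}$ and bounding $\sqrt{cT}$ by an absolute constant) is the same algebra the paper leaves implicit.
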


This result shows the following.  First, whenever $\varepsilon$ grows asymptotically more slowly than $N$ the error of the RBF approximation is guaranteed to approach zero as $N \rightarrow \infty$.  On the other hand, if $\varepsilon \sim c N$ then the error only decreases down to some finite value, with that value being dependent of the SVD regularization $\tau$, the extension domain size $T$ and the constant $c$.  In particular, if the product $c T$ is too large, the best achievable accuracy may be significantly larger than $\tau$.  On the other hand, by choosing $c T$ according to $\tau$ as in \eqref{cT_choice_limit} we ensure an asymptotic accuracy of order $\tau$.

\begin{proof}
We use Lemma~\ref{lem:ENtau_main_bd_setK} with $g \in L^2(\bbR)$ the extension of $f$ by zero.  Notice that $\nm{g}_{L^2(\bbR)} = \nm{f}_{L^2(\Omega)}$.  Since $\varepsilon$ satisfies \eqref{eps_min_simplify} we have, in all cases,
\begin{equation*}
    \int_{|\omega| > \min\{N,\varepsilon\} \pi/T} | \hat{g}(\omega) |^2 \D \omega \rightarrow 0,\qquad N \rightarrow \infty.
\end{equation*}
Moreover, if $\varepsilon \sim c N^{\alpha}$, then
\begin{equation*}
\frac{1}{\sqrt{\E^{N^2 \pi^2 / (2 \varepsilon^2 T^2)}-1}}+ \sqrt{\frac{\varepsilon T}{N}} \tau \sim \sqrt{\frac{\varepsilon T}{N}} \tau,\qquad N \rightarrow \infty,    
\end{equation*}
and if $\varepsilon \sim c N$ then
\begin{equation*}
\frac{1}{\sqrt{\E^{N^2 \pi^2 / (2 \varepsilon^2 T^2)}-1}}+ \sqrt{\frac{\varepsilon T}{N}} \tau \sim \frac{1}{\sqrt{\E^{\pi^2 / (2 c^2 T^2)}-1}}+ \sqrt{c T} \tau,\qquad N \rightarrow \infty.    
\end{equation*}
This gives the result.
\end{proof}

\subsection{Error decrease for smooth functions}\label{ss:smooth}

The previous result determines the limiting behaviour, but we are also interested in how fast the term $E_{N,\tau}(f)$ reaches that limit.  This, naturally, depends on the regularity of the function being approximated.  In this section, we study the behaviour for functions in the Sobolev spaces $H^k(\Omega)$. In order to obtain precise bounds, we now use exact regimes such as $\epsilon = c N^{\alpha}$, rather than the asymptotic scaling $\epsilon \sim c N^{\alpha}$ of the previous theorem.

\begin{theorem}\label{thm:convergence}
Let $f \in H^k(\Omega)$.  Then
\begin{enumerate}
\item Suppose that $\varepsilon = c N^{\alpha}$ for some $0 < \alpha < 1$ and $c > 0$.  Then
\begin{align*}
E_{N,\tau}(f) \lesssim \Big [ & (c N^{\alpha} \pi / T)^{-k}  
\\
& + \left (\E^{-\pi^2 N^{2(1-\alpha)} / (4 c^2 T^2)}  + \sqrt{c T}   \tau /N^{(1-\alpha)/2} \right ) \E^{\pi^2/(4 T^2)} \Big ] \nm{f}_{H^k(\Omega)},
\end{align*}
for all $N$ satisfying
\begin{equation*}
N^{1-\alpha} \geq \frac{\sqrt{2 \log(2)} T}{\pi} , \qquad c N^{\alpha} \geq \frac{1}{\sqrt{2}(T-B)} \sqrt{\log\left (\frac{\sqrt{2 \pi} B N}{T \tau^2} \right ) }  
\end{equation*}
\item Suppose that $\varepsilon = c N$ for some $c > 0$.  Then
\begin{align}
E_{N,\tau}(f) \lesssim \Big [ &(\min\{c,1\} N \pi/T)^{-k} \label{eq:linearscaling_error}
\\
& + \left ( \frac{1}{\sqrt{\E^{\pi^2 / (2 c^2 T^2)} - 1}} + \sqrt{c T} \tau \right ) \E^{\pi^2 / (4 T^2)} \Big ]   \nm{f}_{H^k(\Omega)}, \nonumber
\end{align}
for all $N$ satisfying
\begin{equation}
\label{eq:min_N_log_growth}
c N \geq  \frac{1}{\sqrt{2}(T-B)} \sqrt{\log\left (\frac{\sqrt{2 \pi} B N}{T \tau^2} \right ) }.
\end{equation}
In particular, if $\tau < 1/2$ and
\begin{equation}
\label{largest_c}
c \leq \min \left\{ 1 , \frac{\pi}{T \sqrt{2 \log(1+\tau^{-2})}} \right \},
\end{equation}
then
\begin{align*}
E_{N,\tau}(f) \lesssim \left ( (c N \pi / T)^{-k} +  \tau \E^{\pi^2 / (4 T^2)}  \right ) \nm{f}_{H^k(\Omega)}.    
\end{align*}
\end{enumerate}
\end{theorem}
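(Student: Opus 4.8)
The plan is to invoke Lemma~\ref{lem:ENtau_main_bd_setK}, whose right-hand side is already in the exact shape of the two claimed estimates, and to supply an extension $g$ whose Fourier transform decays fast enough to control the high-frequency tail. The essential new ingredient compared to the proof of Theorem~\ref{thm:accuracylimit} is that the zero extension used there is useless for a rate argument: a jump at $\partial\Omega$ forces $\hat g$ to decay only like $|\omega|^{-1}$, capping the convergence at first order irrespective of $k$. So first I would fix a bounded Sobolev extension operator $\mathcal{E}\colon H^k(\Omega)\to H^k(\bbR)$ and set $g=\mathcal{E}f$, so that $g|_\Omega=f$ and $\|g\|_{H^k(\bbR)}\lesssim\|f\|_{H^k(\Omega)}$; since $\Omega$ is a subset of an interval this is a standard construction. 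In particular $\|g\|_{L^2(\bbR)}\le\|g\|_{H^k(\bbR)}\lesssim\|f\|_{H^k(\Omega)}$, which disposes of the $\|g\|_{L^2(\bbR)}$ factor in the second term of Lemma~\ref{lem:ENtau_main_bd_setK}.

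Next I would bound the tail. Writing $\Lambda=\min\{N,\varepsilon\}\pi/T$ for the cutoff, the elementary estimate $|\hat g(\omega)|^2\le\Lambda^{-2k}|\omega|^{2k}|\hat g(\omega)|^2$ for $|\omega|>\Lambda$ gives
\[
\int_{|\omega|>\Lambda}|\hat g(\omega)|^2\,\D\omega\le\Lambda^{-2k}\int_{\bbR}|\omega|^{2k}|\hat g(\omega)|^2\,\D\omega\lesssim\Lambda^{-2k}\,\|g\|_{H^k(\bbR)}^2,
\]
by the Fourier characterization of $H^k(\bbR)$, so the first term of Lemma~\ref{lem:ENtau_main_bd_setK} is $\lesssim\Lambda^{-k}\|f\|_{H^k(\Omega)}$. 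It then remains only to identify $\Lambda$ in each regime: for $\varepsilon=cN^{\alpha}$ with $\alpha<1$ we have $\min\{N,\varepsilon\}=cN^{\alpha}$ in the stated range of $N$, giving $(cN^{\alpha}\pi/T)^{-k}$; for $\varepsilon=cN$ we have $\min\{N,\varepsilon\}=\min\{c,1\}N$, giving $(\min\{c,1\}N\pi/T)^{-k}$.

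For the coefficient-norm term I would treat the two regimes separately. In the linear case $\varepsilon=cN$ the quantity $N^2\pi^2/(2T^2\varepsilon^2)=\pi^2/(2c^2T^2)$ is constant, so the bracketed factor is literally $\tfrac{1}{\sqrt{\E^{\pi^2/(2c^2T^2)}-1}}+\sqrt{cT}\,\tau$ and no further work is needed; the constraint \eqref{eq:min_N_log_growth} is exactly \eqref{eps_min_simplify} with $\varepsilon=cN$, so the lemma applies. In the sublinear case $N^2/\varepsilon^2=N^{2(1-\alpha)}/c^2\to\infty$, and I would use $\E^{x}-1\ge\tfrac12\E^{x}$ (valid once $x\ge\log2$, which the lower bound on $N^{1-\alpha}$ in the hypotheses guarantees) to replace $1/\sqrt{\E^{x}-1}$ by $\sqrt2\,\E^{-x/2}=\sqrt2\,\E^{-\pi^2N^{2(1-\alpha)}/(4c^2T^2)}$, where $x=\pi^2N^{2(1-\alpha)}/(2c^2T^2)$; the companion factor $\sqrt{T\varepsilon/N}\,\tau$ equals $\sqrt{cT}\,\tau/N^{(1-\alpha)/2}$ directly. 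For the corollary I would impose $c\le1$ (so $\min\{c,1\}=c$, matching $(cN\pi/T)^{-k}$) together with $cT\le\pi/\sqrt{2\log(1+\tau^{-2})}$; the latter is equivalent to $\E^{\pi^2/(2c^2T^2)}-1\ge\tau^{-2}$, i.e.\ to $\tfrac{1}{\sqrt{\E^{\pi^2/(2c^2T^2)}-1}}\le\tau$, and since $\tau<1/2$ forces $cT$ (hence $\sqrt{cT}$) below an absolute constant, the whole bracket collapses to $\lesssim\tau$, yielding the stated estimate.

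I expect the only genuine obstacle to be the choice and control of the extension $g$: everything else follows mechanically from Lemma~\ref{lem:ENtau_main_bd_setK} once the high-frequency tail of $\hat g$ is tied to $\|f\|_{H^k(\Omega)}$ through a norm-bounded $H^k$ extension, after which the proof is the routine algebra of substituting each scaling into the two bracketed factors and applying the inequality $\E^{x}-1\ge\tfrac12\E^{x}$.
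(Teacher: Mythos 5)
Your proposal is correct and follows essentially the same route as the paper's proof: a norm-bounded Sobolev extension $g$ of $f$, the Fourier characterization of $\nm{g}_{H^k(\bbR)}$ to bound the high-frequency tail by $(\min\{N,\varepsilon\}\pi/T)^{-k}\nm{f}_{H^k(\Omega)}$, and then Lemma~\ref{lem:ENtau_main_bd_setK} with the inequality $\E^x-1\geq\tfrac12\E^x$ in the sublinear regime. You are in fact slightly more explicit than the paper on the final corollary (the equivalence of \eqref{largest_c} with $\E^{\pi^2/(2c^2T^2)}-1\geq\tau^{-2}$ and the role of $\tau<1/2$ in bounding $\sqrt{cT}$), but the argument is the same.
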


The main conclusion is that whenever $\varepsilon$ grows more slowly than $N$ the algebraic convergence order is suboptimal.  Specifically, ${\mathcal O}(N^{-\alpha k})$, as opposed to the optimal ${\mathcal O}(N^{-k})$ rate.  Thus, this theorem suggests choosing $\varepsilon = c N$.   Seeking to make the algebraically-decreasing term in this case as small as possible, one is tempted to take $c$ as large as possible.  However, as was already noted in Theorem~\ref{thm:accuracylimit}, this limits the maximal achievable accuracy.  To attain the desired ${\mathcal O}(\tau)$ accuracy, we need to limit $c$ according to \eqref{largest_c}.  Hence, the overall conclusion is to choose $\varepsilon = c N$ with
\begin{equation}\label{eq:optimal_c}
c = c^* = \frac{\pi}{T \sqrt{2 \log(1+\tau^{-2})}}.
\end{equation}
This also suggests choosing $T > B$ as small as possible.  However, because of the condition \eqref{eq:min_N_log_growth}, choosing $T$ too small may result in a longer pre-asymptotic regime of the error, before the error rates of Theorem~\ref{thm:convergence} kick in.

\begin{proof}[Proof of Theorem~\ref{thm:convergence}]
By the Sobolev extension theorem, there exists an extension $g \in H^k(\bbR)$ of $f$ satisfying $\nm{g}_{H^k(\bbR)} \lesssim \nm{f}_{H^k(\Omega)}$.  Recall that, by definition,
\begin{equation*}
\nm{g}^2_{H^k(\bbR)} = \int^{\infty}_{-\infty} (1+|\omega|)^{2k} | \hat{g}(\omega) |^2 \D \omega.    
\end{equation*}
Hence
\begin{align*}
\sqrt{\int_{|\omega| > \min\{N,\varepsilon\} \pi/T} | \hat{g}(\omega) |^2 \D \omega} \lesssim (\min\{N,\varepsilon\} \pi/T)^{-k} \nm{f}_{H^k(\Omega)}.
\end{align*}
We use Lemma~\ref{lem:ENtau_main_bd_setK}.  The case for $\epsilon = c N$ follows immediately.  For the case $\epsilon = c N^{\alpha}$ we merely notice that
\begin{align*}
\E^{N^2 \pi^2 / (2 \varepsilon^2 T^2)}-1  \geq \frac12 \E^{N^2 \pi^2 / (2 \varepsilon^2 T^2)} = \frac12 \E^{N^{(2-\alpha)} \pi^2 / (2 c^2 T^2)}, 
\end{align*}
due to the assumption $N$.  This completes the proof.
\end{proof}

\section{Numerical experiments}
\label{sec:experiments}

We illustrate the methodology of this paper with a sequence of examples. For the case of univariate approximation with a Gaussian radial function, these results confirm the results of the theory.

We vary the scaling of the shape parameter $\varepsilon$ with $N$, the shape of the domain $\Omega$ as well as its dimension and the radial function $\phi$. In all examples, the centers $\{ \xi_n\}$ are located in the bounding box $[-T,T]^d$, whereas the sampling points $\{ x_m\}$ are restricted to lie inside $\Omega$. We solve the rectangular least squares problem using a truncated SVD decomposition with a regularization threshold $\tau$. All singular values smaller than $\tau \sigma_1$, with $\sigma_1$ the largest one, are discarded.

\subsection{Centers outside the computational domain and the choice of $\boldmath{T}$}

\begin{figure}
    \centering
    \begin{tikzpicture}[scale=0.98, every node/.style={transform shape}]
    \node[anchor=south west,inner sep=0] at (0,0) {\includegraphics[trim=1cm 1cm 2cm 2cm, clip,width=\textwidth]{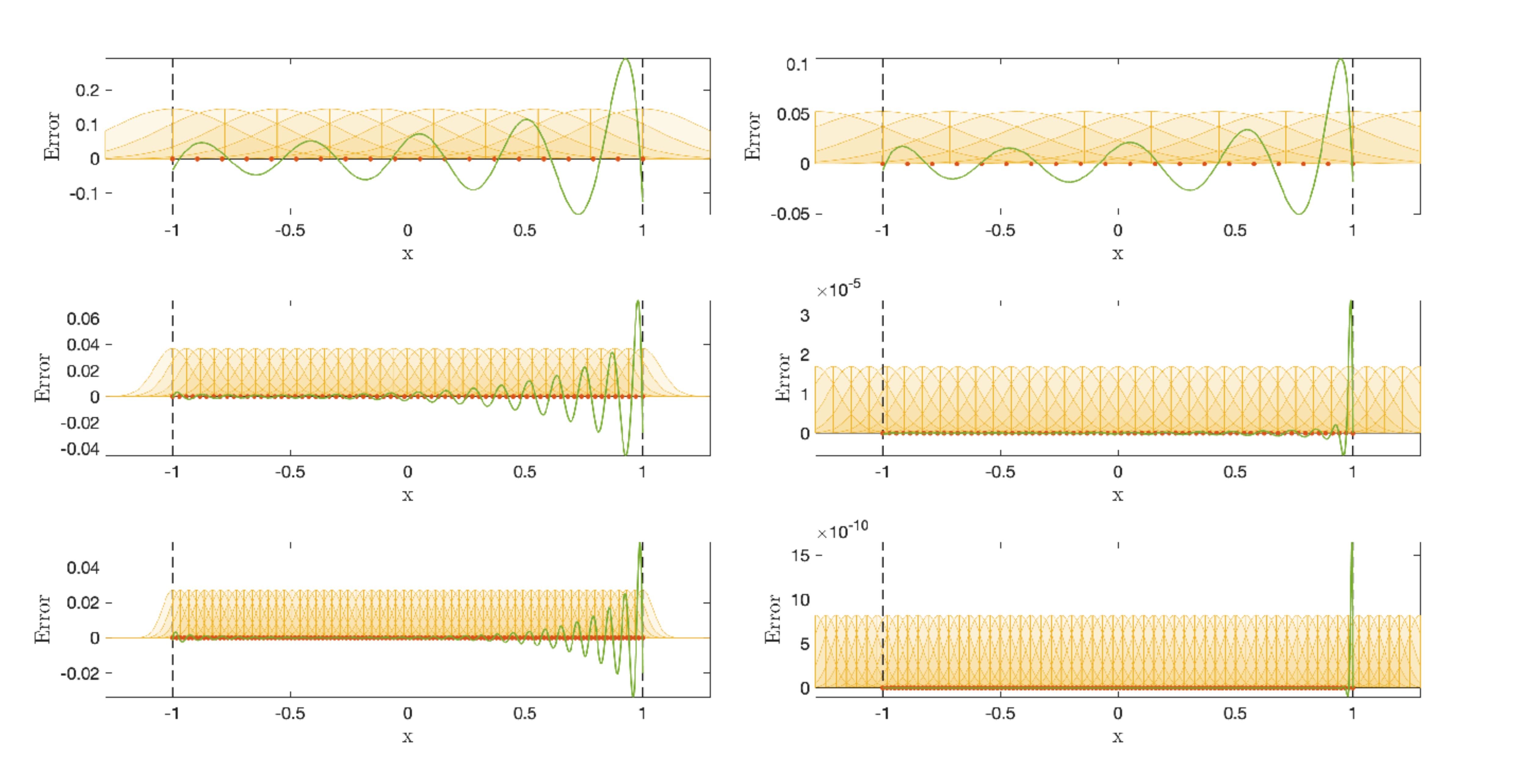}};
    \node[text width=2cm] at (4,6.6) {$T=1$};
    \node[text width=2cm] at (10,6.6) {$T=1.286$};
    \node[text width=2cm, rotate=-90] at (12.9,5.15) {$N=10$};
    \node[text width=2cm, rotate=-90] at (12.9,3) {$N=35$};
    \node[text width=2cm, rotate=-90] at (12.9,0.85) {$N=60$};
\end{tikzpicture}
    \caption{Illustration of the error in approximating $f(x)=\frac{1}{x-1.2}$ when (left) all centers lie within the domain $[-1,1]$ and (right) some centers lie outside it. The green curves show the difference between the true and the approximated functions, and the yellow curves show the radial function translates centered at each one of the $N$ centers. There are twice as many sample points as centers. The error in the left column never decays to $0$ while the convergence is fast in the right column. Without centers outside the domain, the RBF translates are not sufficiently dense near the endpoints.}
    \label{fig:fig_5_1}
\end{figure}

First, we illustrate the importance of adding centers outside the computational domain in the linear scaling regime $\varepsilon = cN$. We consider the interval $\Omega = [-1,1]$. Intuitively, due to the truncation at the endpoints, the radial basis functions lack sufficient density near $\pm 1$ when centers are confined to $[-1,1]$. This is clearly visible in Fig.~\ref{fig:fig_5_1}, in which the translations of the Gaussian radial basis function are plotted with and without centers outside $[-1,1]$. Loosely speaking, the approximation space is enlarged by adding centers outside the domain, until the leftmost and rightmost RBF are numerically small on the computational domain itself.

More precisely, our analysis has shown that convergence sets in once $N$ is larger than a minimal value reflected in the bound \eqref{eq:min_N_log_growth}. This bound merely ensures the above-mentioned condition. Indeed, note that the rightmost RBF in \eqref{rbf_sum_equispaced} is $\phi(\varepsilon(x - T))$. Letting $\varepsilon=cN$ equal the right-hand side  of \eqref{eq:min_N_log_growth} and evaluating this RBF at the point $x=1$ yields, noting that $B=1$ in this case and recalling the scaling by $\sqrt{\varepsilon}$,
\[
 \sqrt{\varepsilon} \phi(\varepsilon(1 -T)) = \sqrt{cN} \phi\left(\frac{1}{\sqrt{2}}\sqrt{\log\left(\frac{\sqrt{2\pi}N}{T\tau^2}\right)}\right) = \sqrt{cN}\E^{-\frac12\log\left(\frac{\sqrt{2\pi}N}{T\tau^2}\right)} = \sqrt{\frac{cT}{\sqrt{2\pi}}} \tau.
\]
That is the maximal value of the $N$-th radial basis function on $[-1,1]$: it is on the order of $\tau$.

\subsection{Scaling regimes of the shape parameter}

We consider three regimes for the shape parameter $\varepsilon(N)$: (i) $\varepsilon$ grows like $cN$, (ii) it grows like $c \sqrt{N}$ or (iii) it is constant.

\subsubsection{Linear scaling: $\varepsilon = cN$}

\begin{figure}
    \centering
    \begin{minipage}{0.5\textwidth}
        \centering
        \includegraphics[width=1\textwidth]{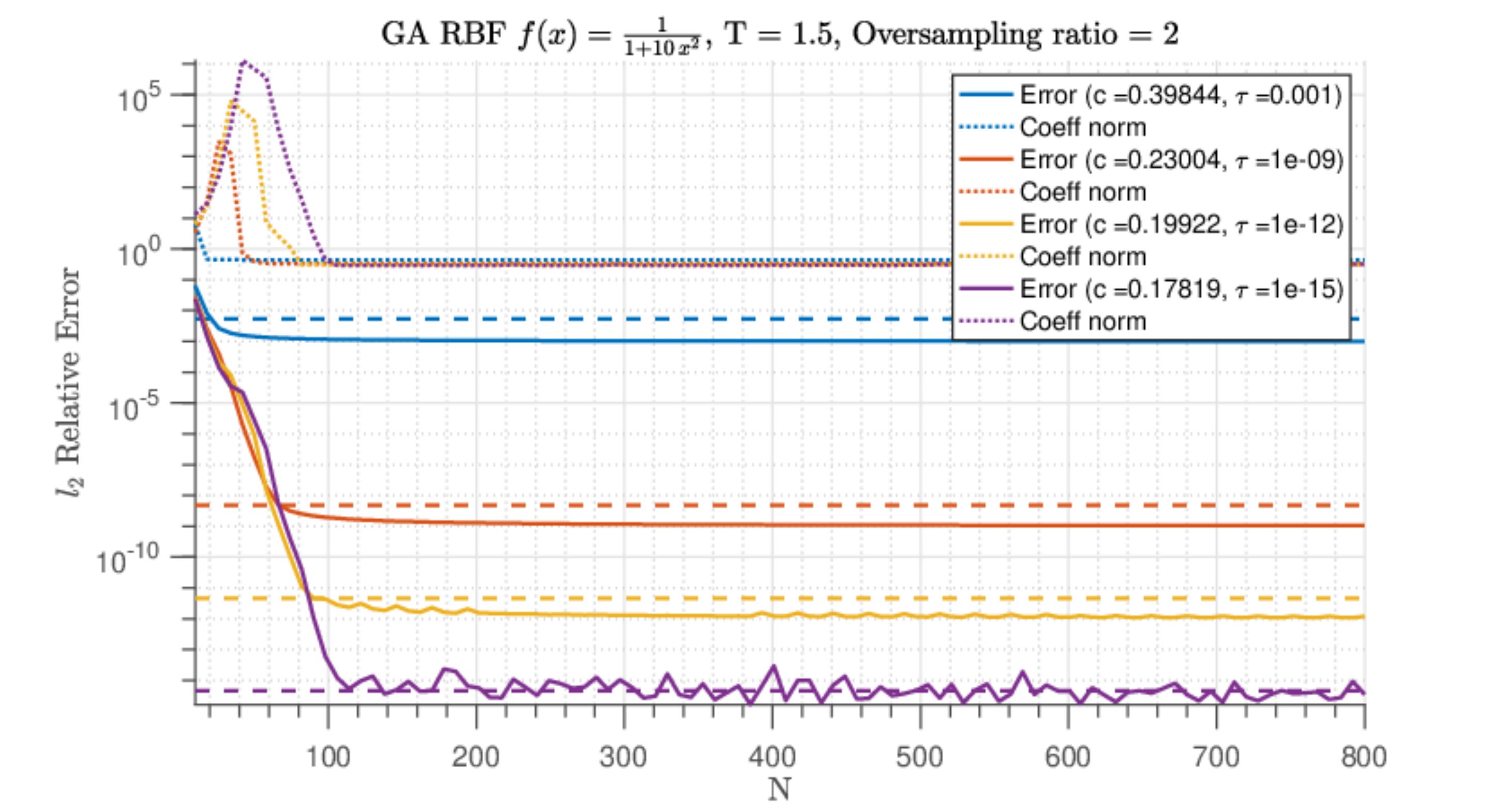}
    \end{minipage}\hfill
    \begin{minipage}{0.5\textwidth}
        \centering
        \includegraphics[width=1\textwidth]{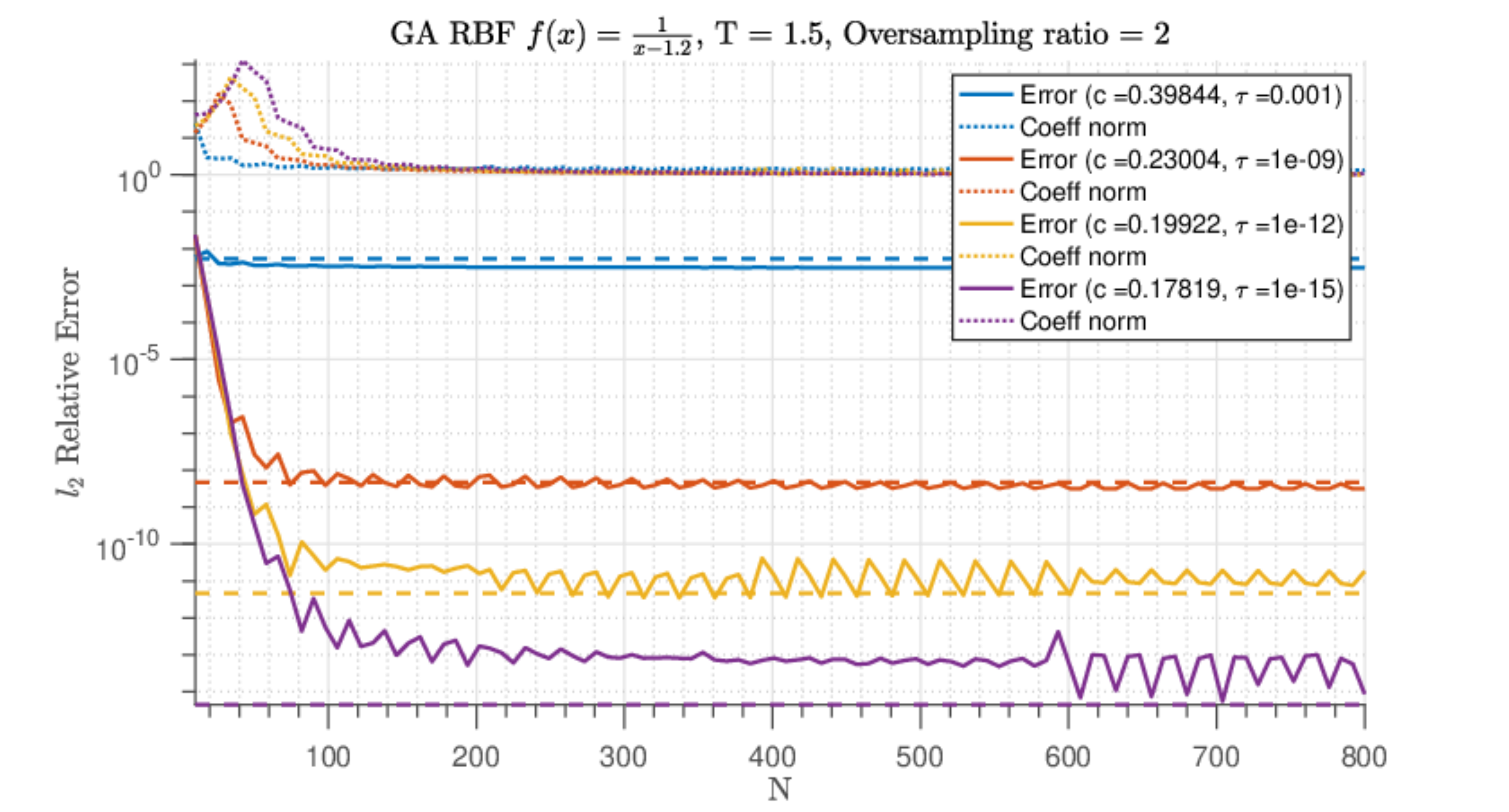}
    \end{minipage}
        \begin{minipage}{0.5\textwidth}
        \centering
        \includegraphics[width=1\textwidth]{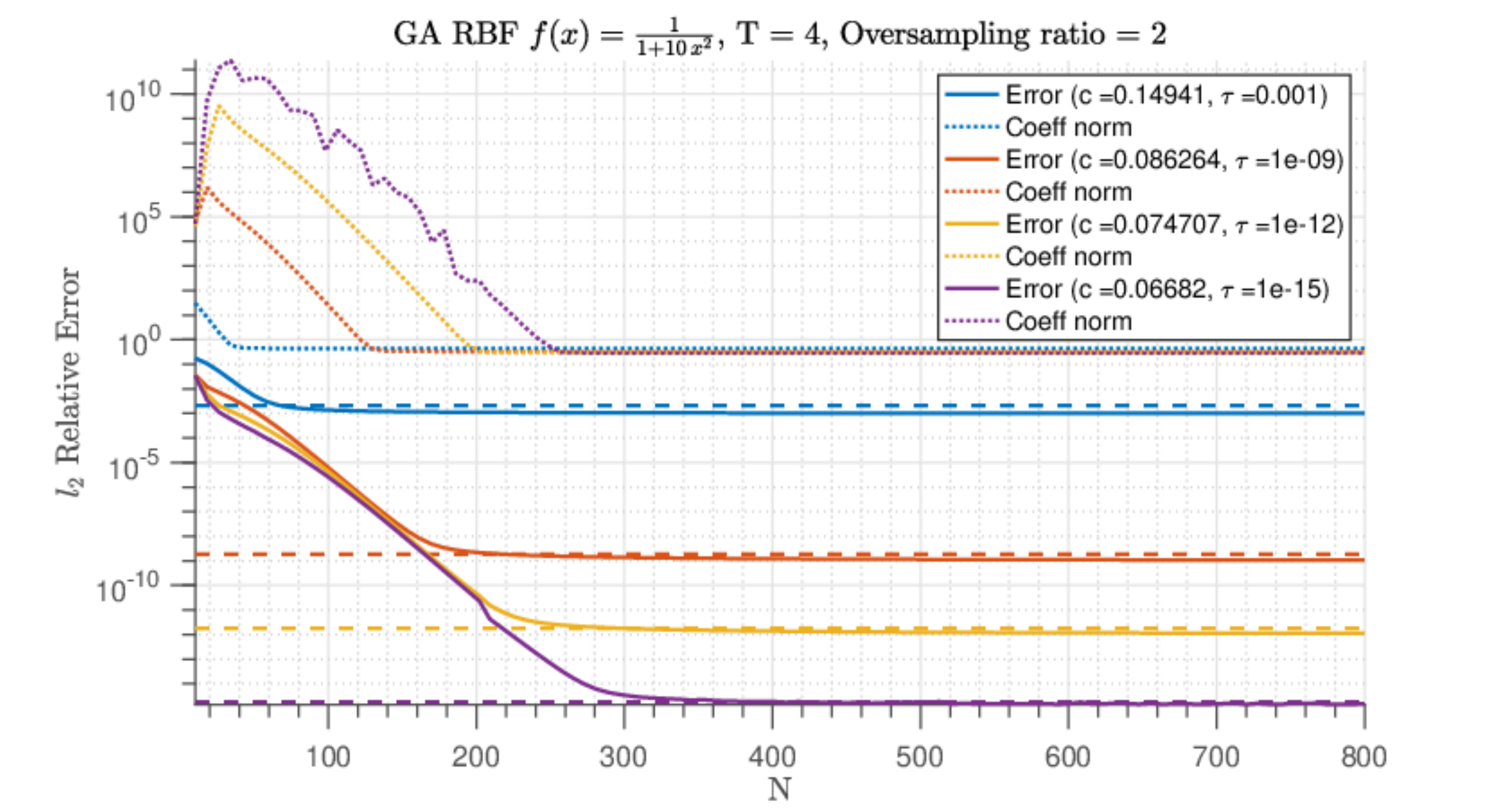} 
    \end{minipage}\hfill
    \begin{minipage}{0.5\textwidth}
        \centering
        \includegraphics[width=1\textwidth]{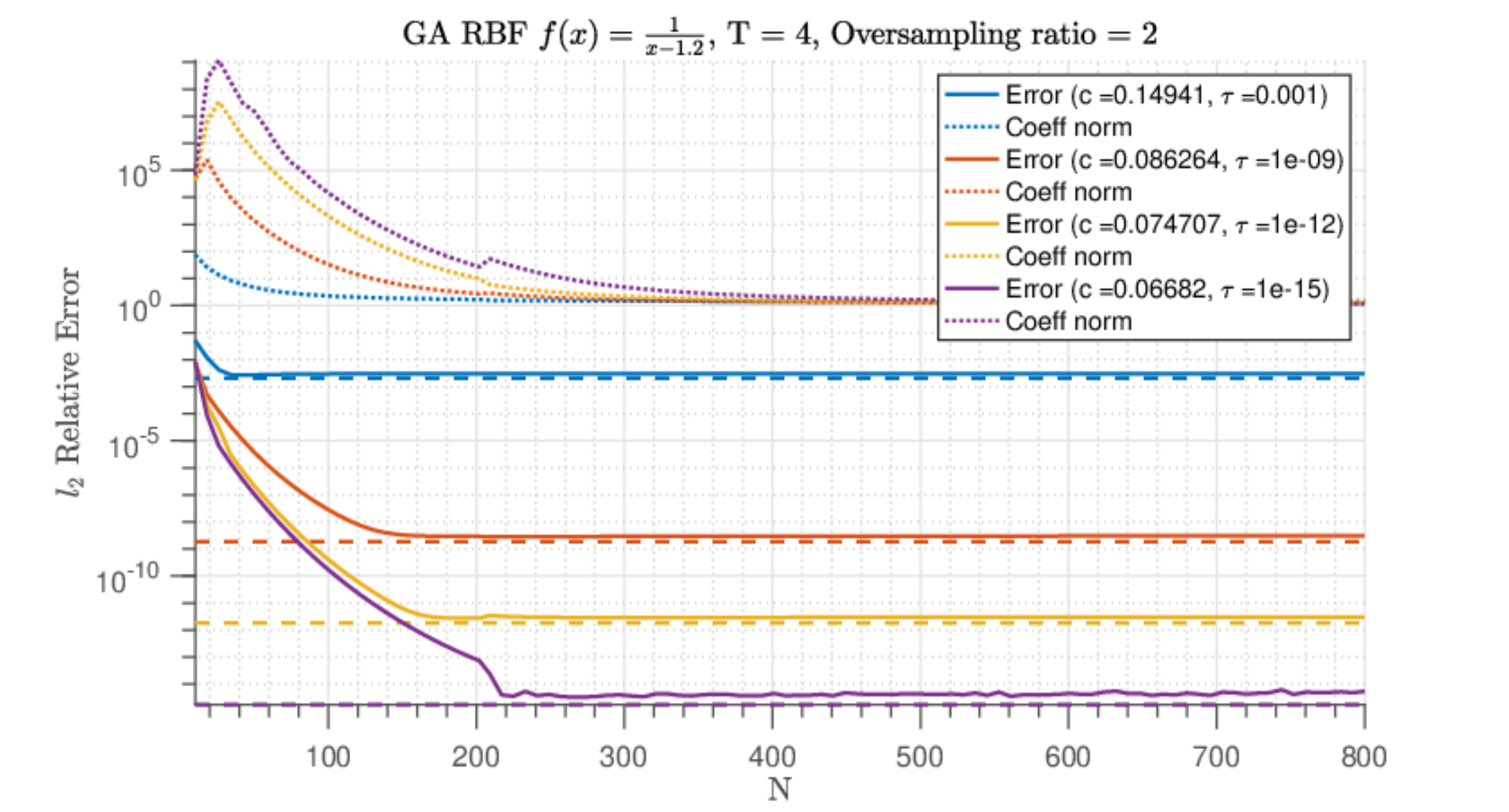}
    \end{minipage}
    \caption{Error and magnitude of the coefficients in the linear scaling regime $\varepsilon=c N$ for the functions $f(x)=\frac{1}{1+10\,x^2}$ (left column) and $f(x) = 1/(x-1.2)$ (right column). Results are shown for varying values of the regularization threshold $\tau$. Different values of $T$ are used in the top row ($T=1.5$) and bottom row ($T=4$). The proportionality constant $c$ is related to $T$ and $\tau$ via \eqref{eq:optimal_c}. Shown with horizontal dashed lines is the estimate of the limiting accuracy~\eqref{eq:linearscaling_limitingaccuracy}.}\label{fig:linearscaling_optimalc}
\end{figure}

Results for the linear scaling regime $\varepsilon = cN$ are shown in Fig.~\ref{fig:linearscaling_optimalc} for two functions approximated on $[-1,1]$: $f(x)=1/(1+10x^2)$, the Runge function with poles near the real line in the middle of the interval (left column), and $f(x) = 1/(x-1.2)$ with a pole near the right endpoint (right column). We have used an oversampling factor $\gamma=2$. Results are shown for different values of the threshold $\tau$, indicated on the figure, and with $c$ chosen depending on $T$ and $\tau$ according to our estimate of the optimal value \eqref{eq:optimal_c}.

In all cases, there is an initial regime in which the coefficient norm grows. Smaller values of the threshold $\tau$ result in larger coefficient norms. Note that decreasing $\tau$ leads to decreasing $c$ and decreasing $\varepsilon$ and, hence, to flatter radial basis functions. However, as the approximation converges to its best accuracy, the coefficient norm also settles down to a moderate value. This initial regime of coefficient growth can potentially be avoided by using the method introduced in~\cite{adcock2021bounded}.

Seemingly geometric convergence is observed in all cases, because the functions involved are analytic on $[-1,1]$. In the linear scaling regime the maximal achievable accuracy is limited and depends on $c$, $T$ and $\tau$. The limiting accuracy predicted by Theorem~\ref{thm:accuracylimit}, equation~\eqref{eq:linearscaling_limitingaccuracy}, is
\begin{equation*}
\left(\frac{1}{\sqrt{e^{\pi^2/(2 c^2 T^2)} - 1}} + \sqrt{c T} \tau \right) e^{\pi^2/(4 T^2)}.
\end{equation*}
This value is included in the figures with horizontal dashed lines. There is very good agreement with this limit in all cases.

Comparing the top and bottom row illustrates that convergence rate in this example is better for smaller $T$, in agreement with Theorem~\ref{thm:convergence}. As long as \eqref{cT_choice_limit} is satisfied, theory also predicts that smaller $T$ is better. We omit further experiments involving varying $T$ but note that a thorough investigation of the influence of the extension parameter $T$ was given for Fourier extension approximations in~\cite{adcock2014resolution}.

\begin{figure}
    \centering
    \begin{minipage}{0.5\textwidth}
        \centering
        \includegraphics[width=1\textwidth]{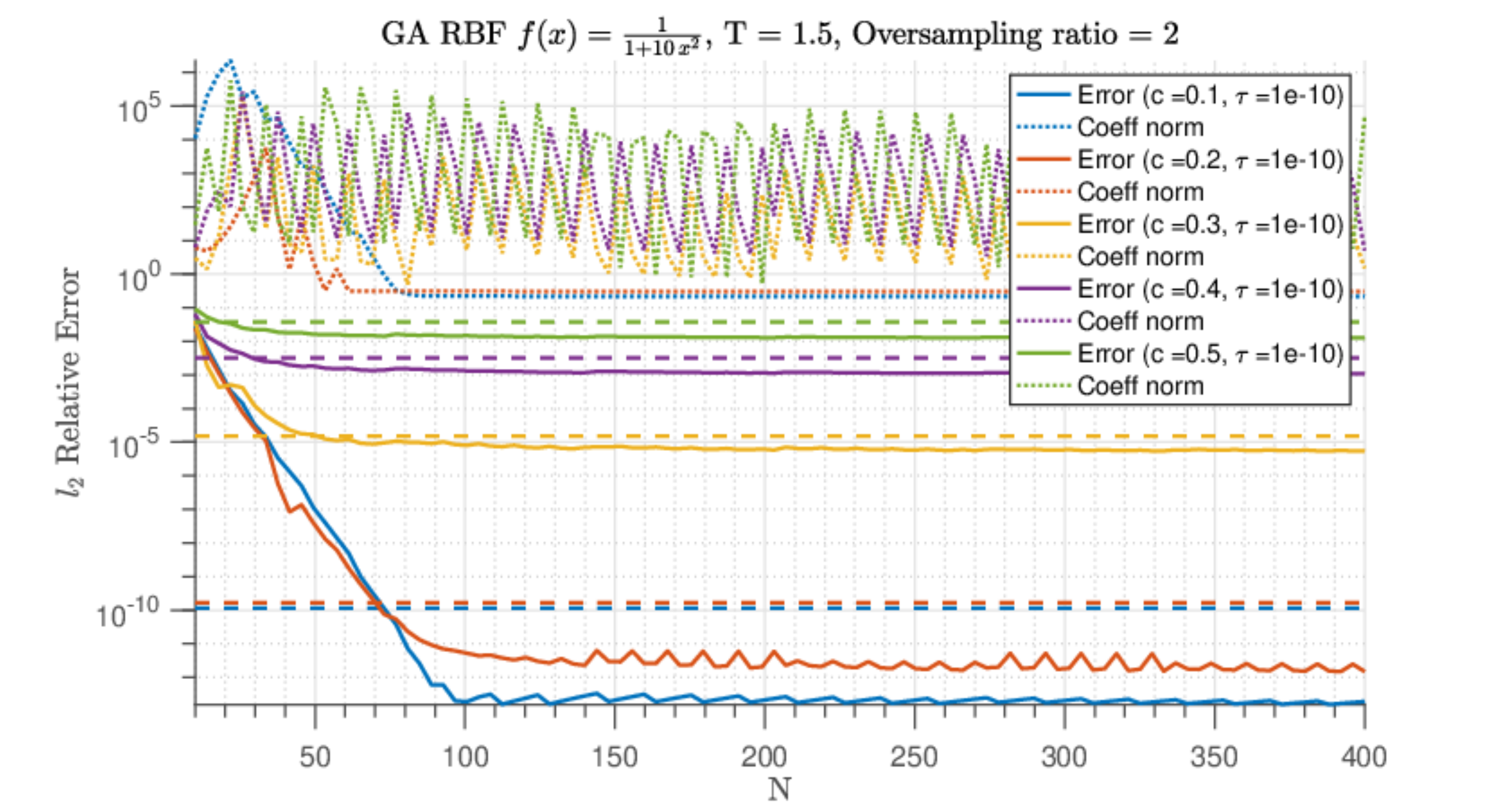}
    \end{minipage}\hfill
    \begin{minipage}{0.5\textwidth}
        \centering
        \includegraphics[width=1\textwidth]{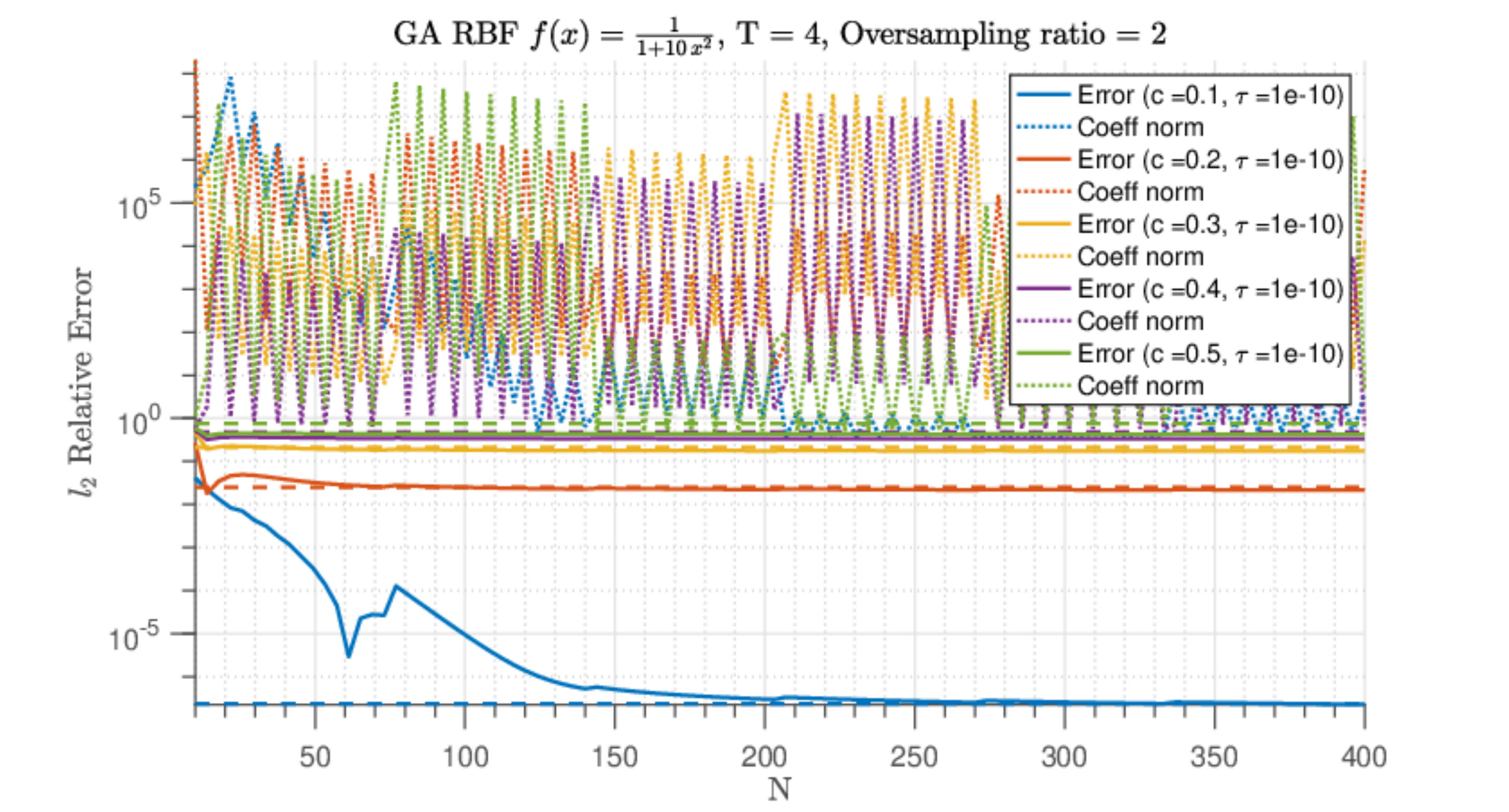}
    \end{minipage}
    \caption{This figure is similar to Fig.~\ref{fig:linearscaling_optimalc}, but here we fix the threshold $\tau= 1e-10$ and vary the proportionality constant $c$ of the linear scaling regime $\varepsilon = cN$. Suboptimal values of $c$ lead to saturation of the error at much higher levels than $\tau$.}\label{fig:linearscaling_fixedc}
\end{figure}

We repeat the experiment in Fig.~\ref{fig:linearscaling_fixedc} for a fixed value of $\tau= 1e-10$ and varying $c$. There is still a good agreement between the maximal achievable accuracy and the predicted accuracy limit. However, in this regime we clearly observe saturation errors: the accuracy limit can be much higher than the threshold value $\tau$. For values of $c$ where this is the case (e.g. $c=0.5$), the coefficient norm also exhibits erratic behaviour as a function of $N$.

\subsubsection{Sublinear scaling: $\varepsilon = c \sqrt{N}$}\label{sss:sublinear}

 \begin{figure}
    \centering
    \begin{minipage}{0.5\textwidth}
        \centering
        \includegraphics[width=1\textwidth]{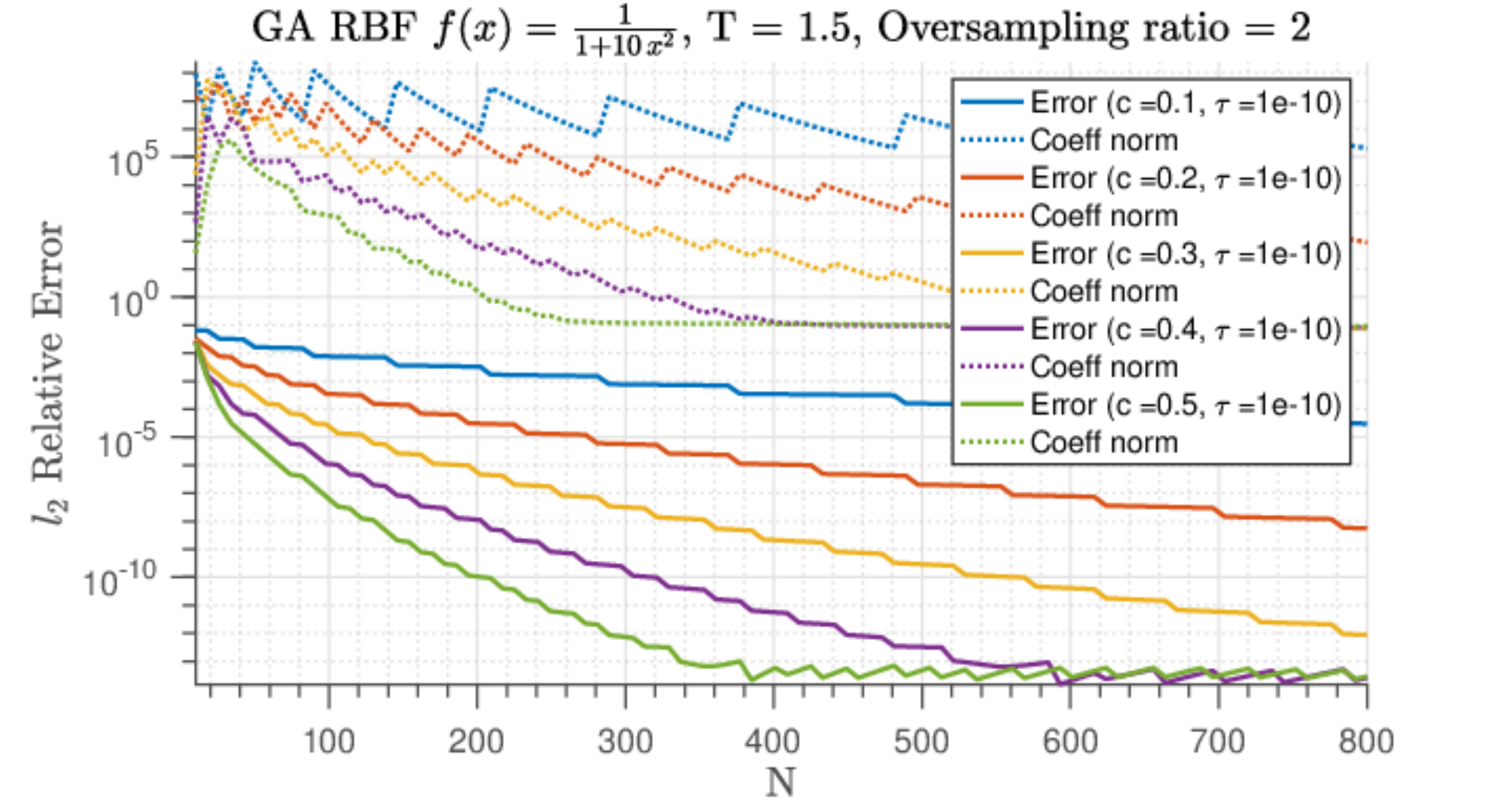}
    \end{minipage}\hfill
    \begin{minipage}{0.5\textwidth}
        \centering
        \includegraphics[width=1\textwidth]{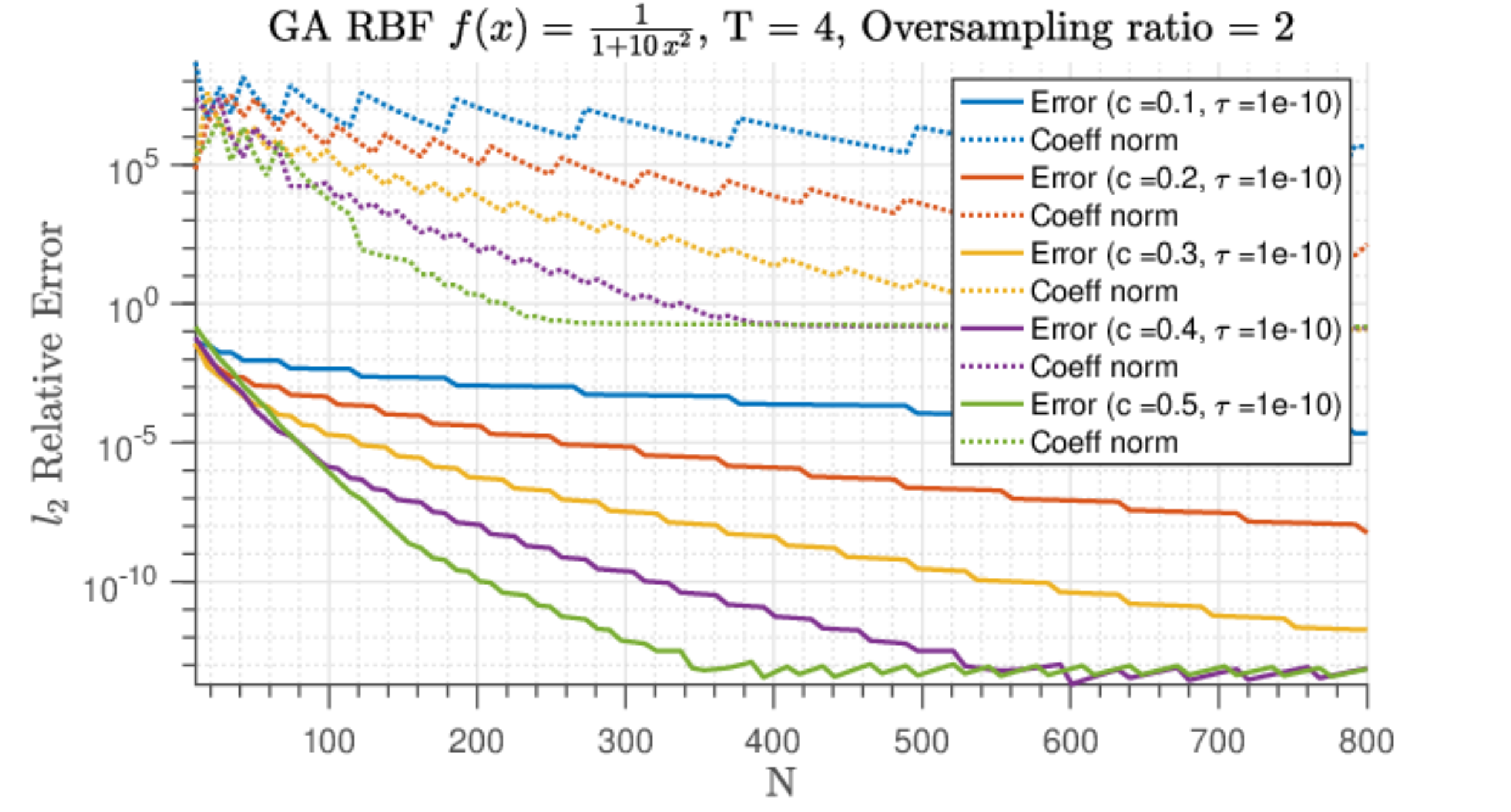}
    \end{minipage}
    \caption{Error and magnitude of the coefficients in the square-root scaling regime $\varepsilon = c \sqrt{N}$ for function $f(x)=\frac{1}{1+10\,x^2}$. Different values of $T$ are used in the left and right panel. Each plot includes varying values of $c$, independently of the fixed threshold $\tau = 1e-10$.}\label{fig:sublinearscaling}
\end{figure}

Results are shown in Fig.~\ref{fig:sublinearscaling} for the scaling regime $\varepsilon = c \sqrt{N}$. It follows from Theorem~\ref{thm:accuracylimit} that there is no saturation error in this regime: the expected limiting accuracy for large $N$ is $\mathcal{O}(\tau)$, independently of $c$. However, it follows from Theorem~\ref{thm:convergence} that larger values of $c$ asymptotically lead to more rapid convergence. Both observations are confirmed in Fig.~\ref{fig:sublinearscaling}. The theory makes no claim about the pre-asymptotic regime, and we note that in the right panel of the figure a larger value of $c$ can initially be worse than a smaller one. Once the asymptotic regime of convergence is reached, larger $c$ leads to faster decay of the error as well as to a smaller coefficient norm.

For comparison, we include a result on the approximation of a less smooth function in Fig.~\ref{fig:fig_5_2_2}. To that end we consider the function $f(x) = |x|^5$. The convergence rates are algebraic in this case, but the results are otherwise similar: the linear regime offers faster convergence, up to a saturation limit. The sublinear regime leads to convergence, but does so more slowly.

\begin{figure}
    \centering
    \includegraphics[width=0.7\textwidth]{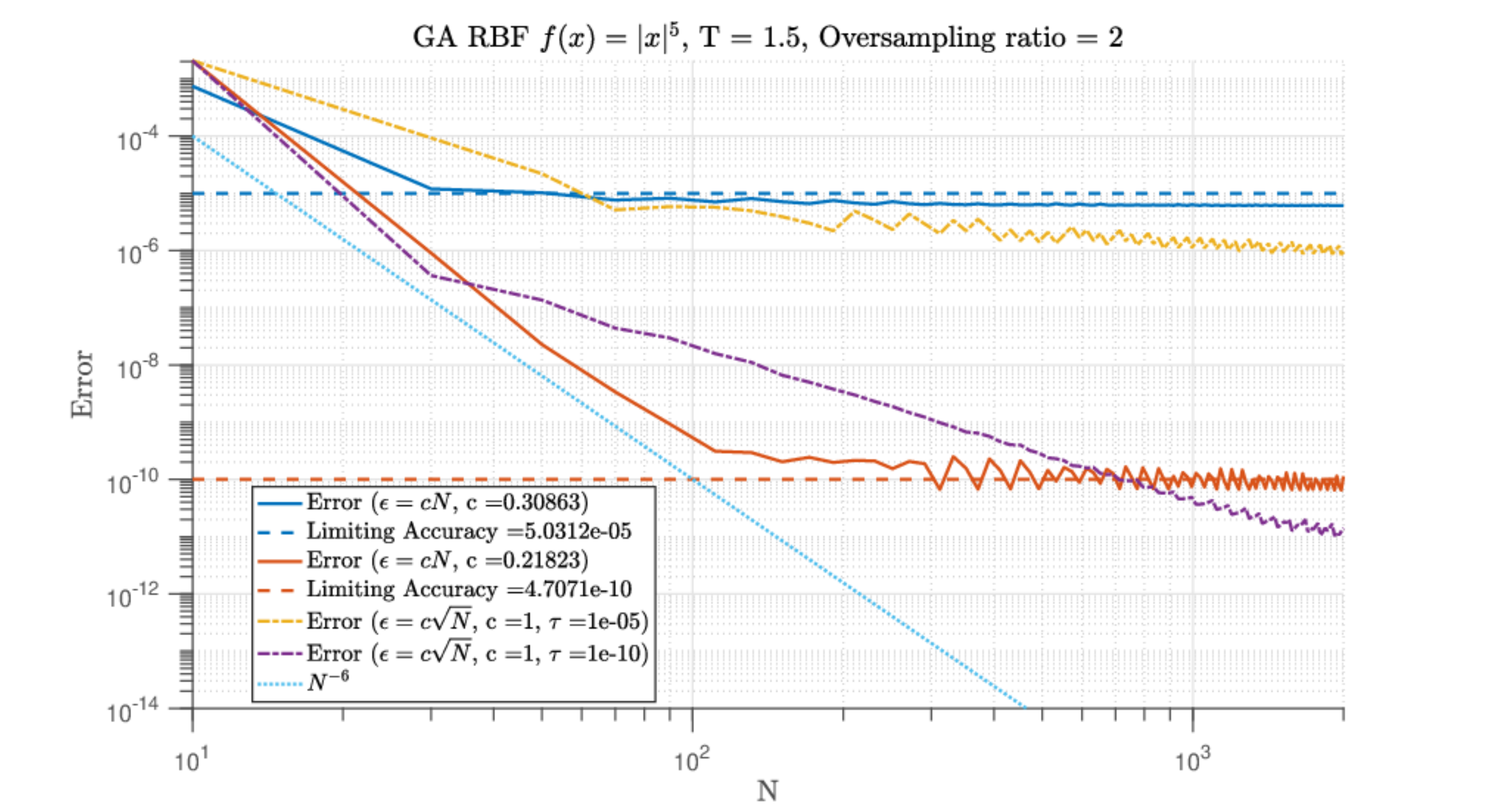}
    \caption{Log-log plot of the error decay using linear and sublinear $\varepsilon$ values when approximating function $f(x)=|x|^5$, for $\tau = 10^{-5}$ and for $\tau = 10^{-10}$. The decay of the error is algebraic in this case, since the target function is of finite regularity. We display the $\log(error)$ versus $\log(N)$ along with some powers of $\frac{1}{N}$ to help gauge the decay. As expected, the linear scaling of $\varepsilon$ leads to a fast convergence, but does not decay past a best error value. On the other hand, a sublinear scaling of the shape parameter leads to a slower decay of the error, and never reaches a minimum error threshold.}
    \label{fig:fig_5_2_2}
\end{figure}

\subsubsection{The flat limit: $\varepsilon = c$}
In the flat limit in $1-$D, one can easily show that the radial function approximant converges to a polynomial over $[-T,T]$, once a stable method has been applied. However, none of these strategies, mentioned in \S \ref{ssec:rbf_approx}, are part of our setting and we will therefore not consider this case.

\subsection{Families of radial basis functions}

Our analysis and the previous experiments are based on the Gaussian RBF. From the point of view of the analysis developed in this paper, and the specific methodologies of proof that we employed, there is a large qualitative difference between compactly supported and non-compactly supported RBF's. However, the main observation that inspired our analysis, namely the interplay of numerical stability with coefficient norm and approximation error, remains the same: the size of the expansion coefficients limits the potentially achievable accuracy in numerical computations.

Here, we repeat the approximation of the Runge function $f(x) = \frac{1}{1+10x^2}$ using four different families of radial basis functions: the multiquadric (MQ), inverse quadratic (IQ), inverse multiquadric (IMQ) and the Gaussian (GA) (recall their definitions in Table~\ref{RadialFunctions}). We always use equispaced centers on $[-T,T]$ and oversampling with an oversampling ratio of $\gamma$.

Results are shown in Fig.~\ref{fig:radialfunctions0} for $T=4$ and $\gamma=2$. For reference, in each figure we have included the limiting accuracy of Gaussian radial basis function as predicted by Theorem \ref{thm:accuracylimit} in dashed lines. We note that in all cases the coefficient norm initially increases, before decreasing down to a limit. In this experiment, it does seem to be the case that the Gaussian RBF offers highest convergence rates, the highest accuracy and the smallest coefficient norm. The loss of accuracy compared to the threshold $\tau$ in the other RBF families is due to a larger coefficient norm. These results have motivated our selection of the Gaussian RBF for further analysis.

Qualitatively similar results are obtained for other choices of $T$ and $\gamma$, except that (as with the Gaussian RBF) accuracy may be lost in the absence of sufficient oversampling. We study the effect of oversampling separately further on.

\begin{figure}[ht]
  \begin{minipage}[b]{0.5\linewidth}
    \includegraphics[width=.98\linewidth]{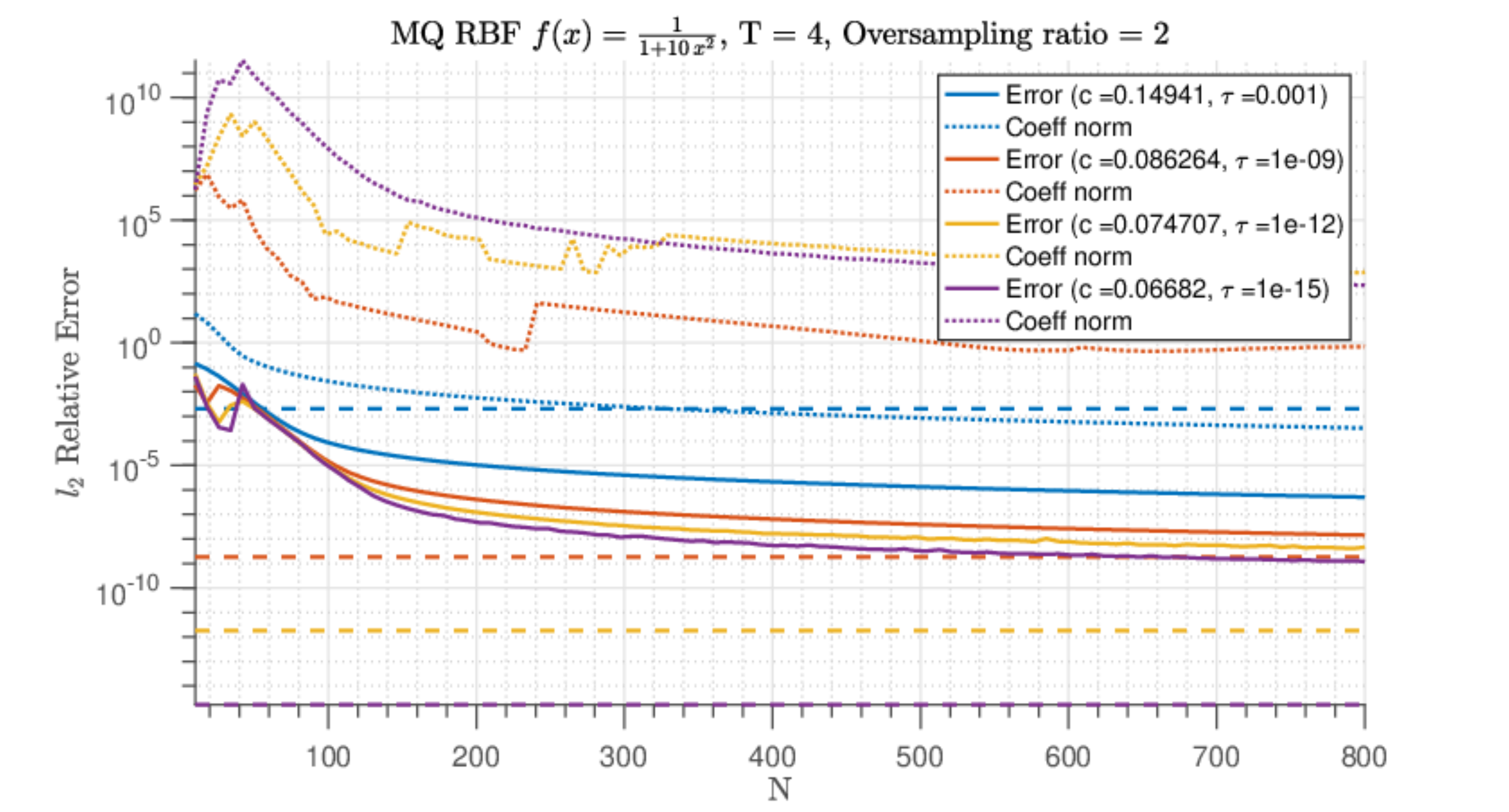} 
  \end{minipage} 
  \begin{minipage}[b]{0.5\linewidth}
    \includegraphics[width=.98\linewidth]{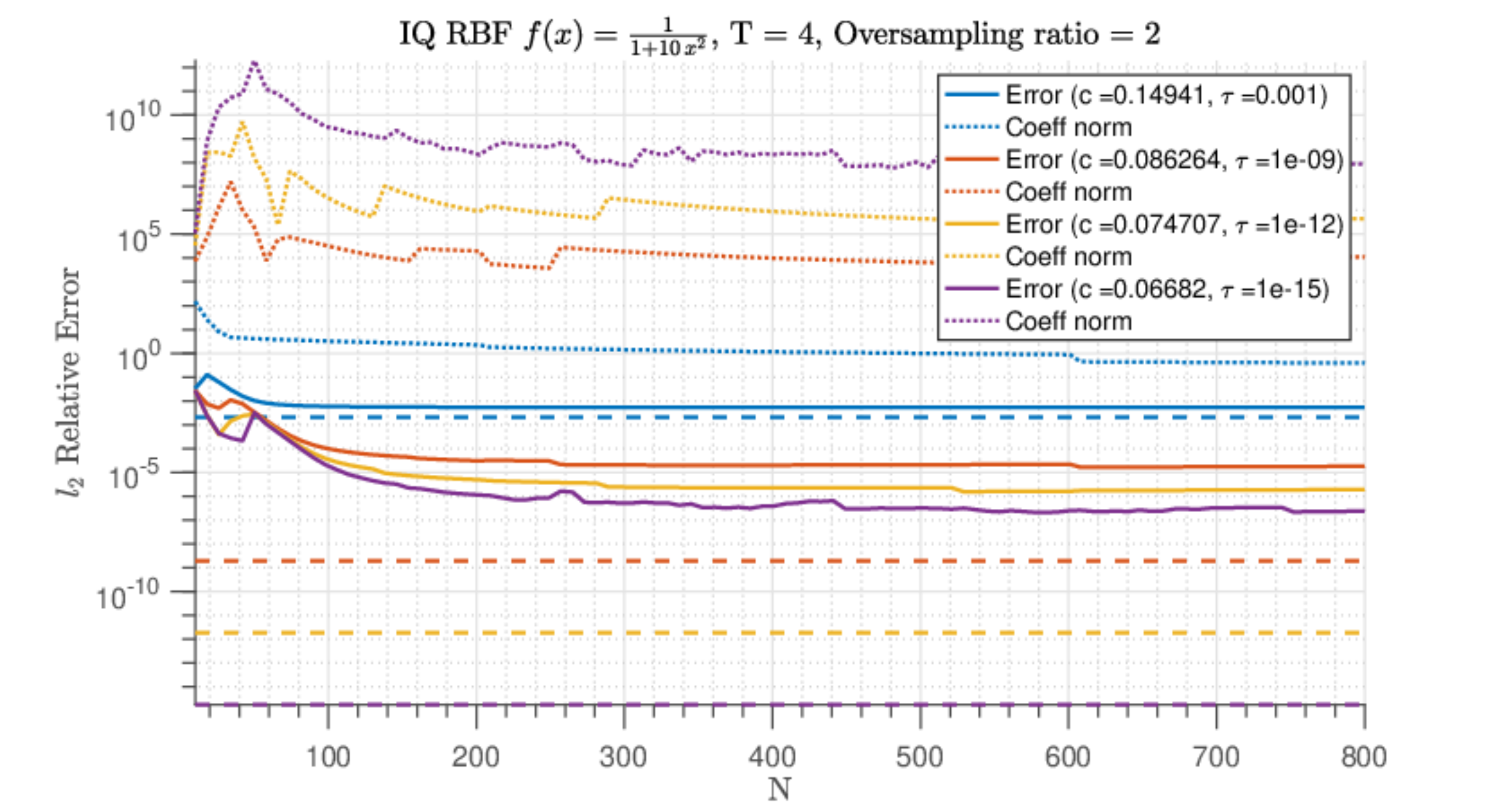} 
  \end{minipage} 
  \begin{minipage}[b]{0.5\linewidth}
    \includegraphics[width=.98\linewidth]{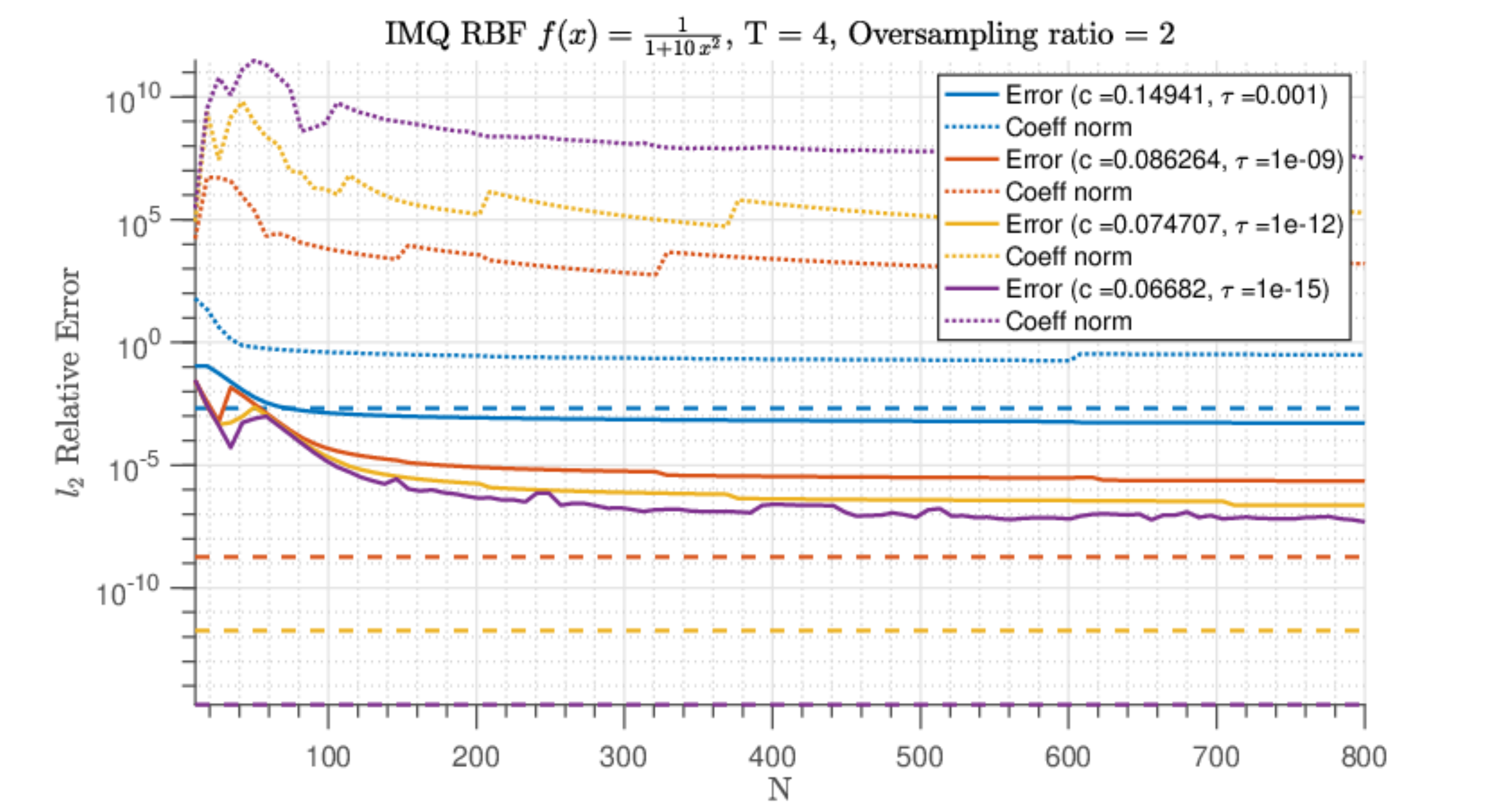}
  \end{minipage}
  \hfill
  \begin{minipage}[b]{0.5\linewidth}
    \includegraphics[width=.98\linewidth]{cError_CoeffNorm_func1_or_2_T_4_RBF_GA.pdf}
  \end{minipage}
  \caption{Approximation of the Runge function $f(x) = \frac{1}{1+10x^2}$ for different radial basis functions: multiquadric (MQ), inverse quadratic (IQ), inverse multiquadric (IMQ) and Gaussian (GA). The approximation is based on equispaced centers in $[-T,T]$ with $T=4$ and oversampling by a factor of $\gamma=2$. The horizontal dashed lines show the expected limiting accuracy \eqref{eq:linearscaling_limitingaccuracy} for GA.}\label{fig:radialfunctions0}
\end{figure}

\subsection{The sampling rate and a rule of thumb}

In our theoretical analysis, we have investigated the minimization problem~\eqref{eq:ENeps}. We have argued that, provided there is sufficient oversampling, the numerical result corresponds to this optimum. A more general minimization problem which does take sampling into account, via the constants $c_{M,N}^\tau$ and $d_{M,N}^\tau$, was \eqref{eq:fna2_error}.

In practice, any difference between \eqref{eq:fna2_error} and \eqref{eq:ENeps} can be easily detected. As a rule of thumb, if \eqref{eq:ENeps} is minimized, then one expects the two terms in this minimization problem to be balanced and, thus, one expects the ratio of the approximation error over the coefficient norm to approximately equal $\tau$:
\begin{equation}\label{eq:ruleofthumb}
\frac{\nm{f - \mathcal{T}_N z }_{L^2(\Omega)}}{\nm{z}_{\ell^2}} \approx \tau.
\end{equation}
Conversely, if this ratio does not approximate $\tau$, then \eqref{eq:ENeps} was not optimally solved and one reason may be that there was not enough oversampling. This means that the error is governed by the original expression \eqref{eq:fna2_error} in which $c_{M,N}^\tau$ and $d_{M,N}^\tau$ are larger than expected. The solution in this case is to increase $M$ with respect to $N$. Recall that we assume that the centers ultimately (i.e., for large $M$) fill the domain $\Omega$ in a quasi-uniform manner, to ensure that $c_{M,N}^\tau, d_{M,N}^\tau \lesssim 1$.

\begin{figure}
    \centering
    \begin{minipage}{0.5\textwidth}
        \centering
        \includegraphics[width=1\textwidth]{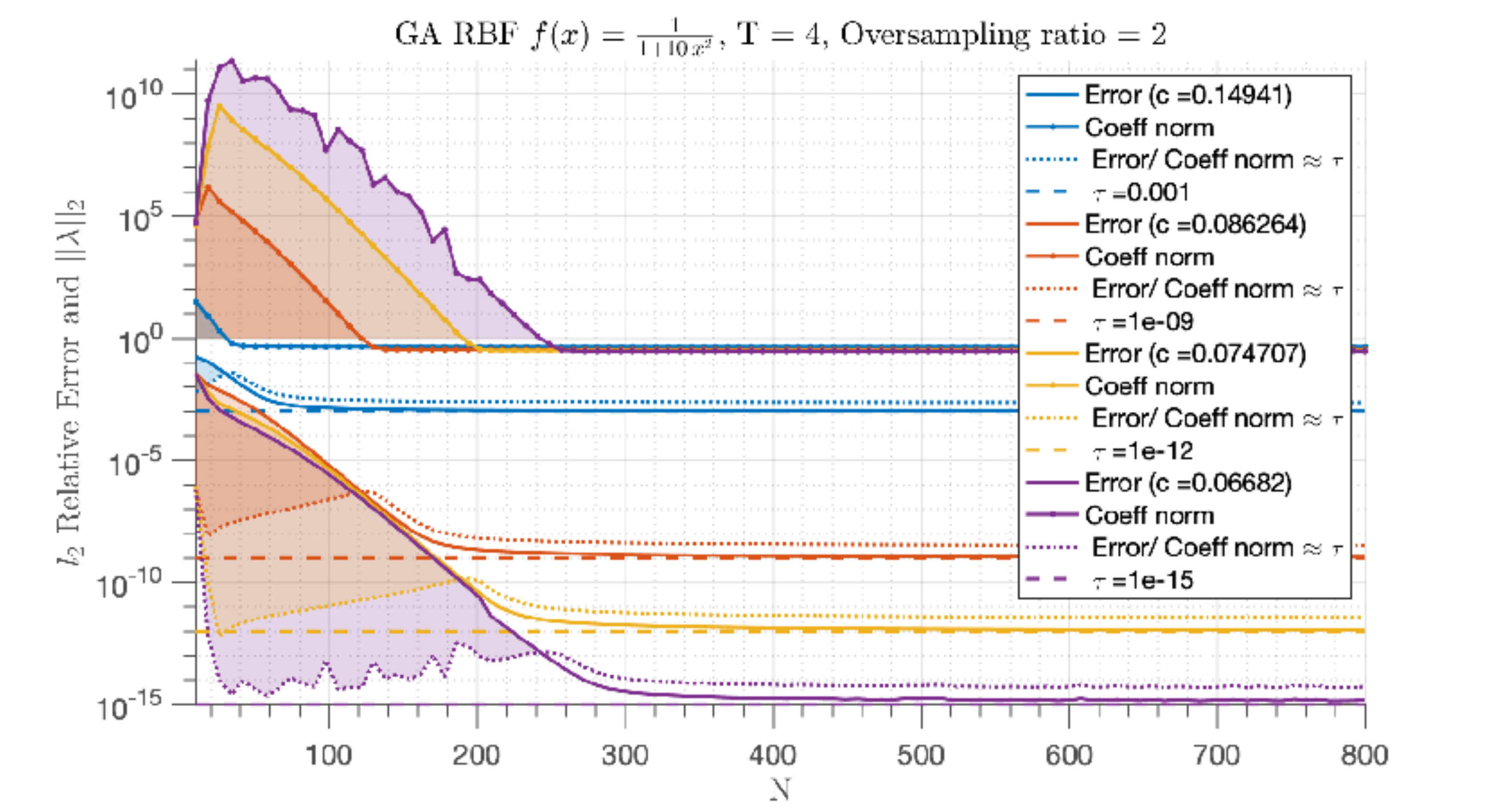}
    \end{minipage}\hfill
    \begin{minipage}{0.5\textwidth}
        \centering
        \includegraphics[width=1\textwidth]{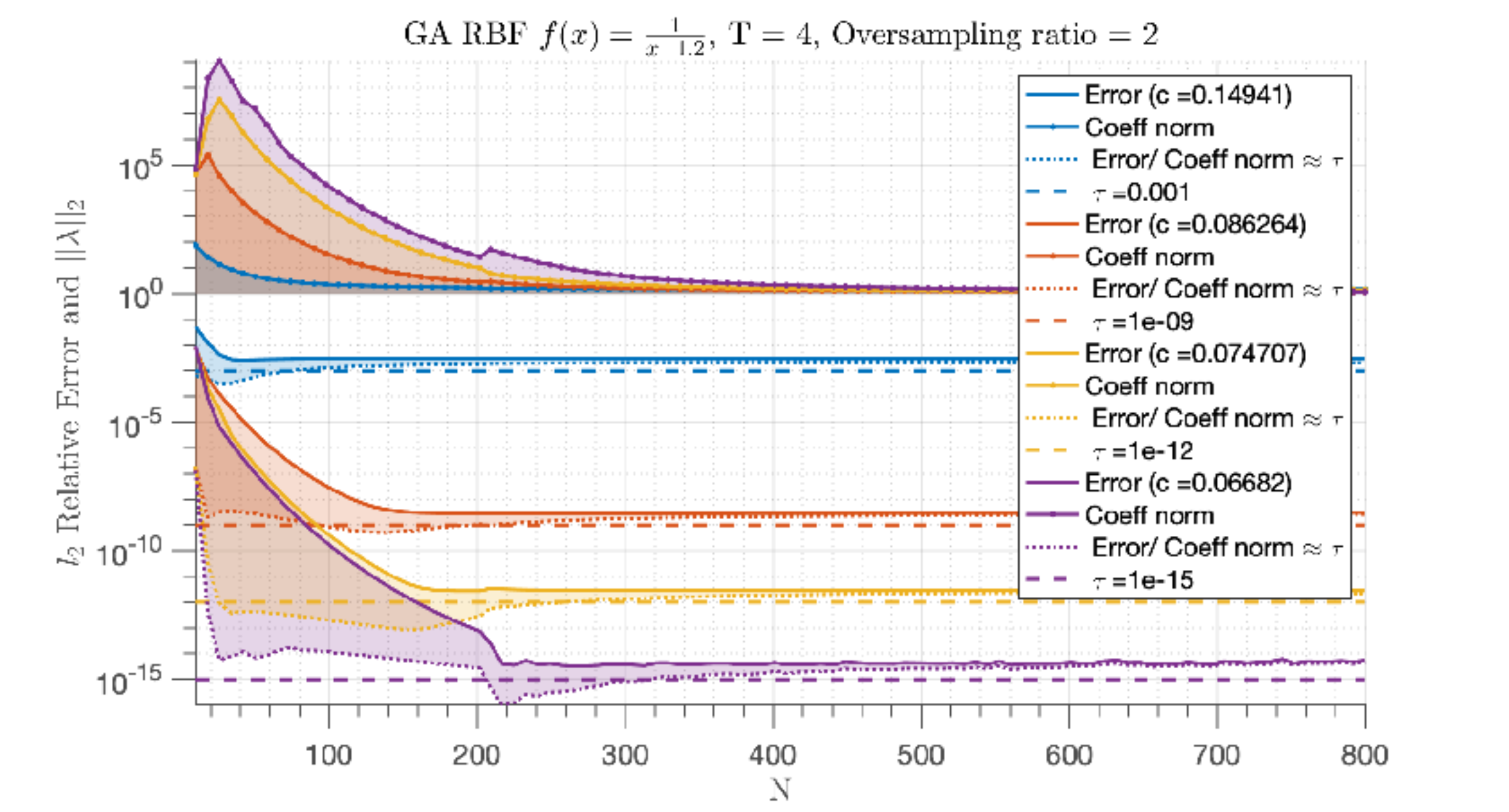}
    \end{minipage}
    \caption{Same as the bottom row of Fig.~\ref{fig:linearscaling_optimalc}, but with an extra indication of the value of $\tau$ and of the ratio of the accuracy over the coefficient norm (dotted lines). As a rule of thumb, this ratio is approximately equal to the threshold $\tau$.}\label{fig:ruleofthumb}
\end{figure}

The rule of thumb is illustrated in Fig.~\ref{fig:ruleofthumb}, where we repeat a part of Fig.~\ref{fig:linearscaling_optimalc} and we have simply added the ratio \eqref{eq:ruleofthumb} to the figure. The dotted lines do approximate $\tau$ in all cases, including in the pre-asymptotic regime. This is an indication that an oversampling factor of $2$ was sufficient for these two examples. Note that the constants $c_{M,N}^\tau, d_{M,N}^\tau$ can in principle also be computed numerically, see~\cite[\S3.6]{fna2}.

\subsection{Two-dimensional examples}
\label{ss:2dcase}

The explicit analytical results of this paper are specific to the univariate Gaussian function. These results can be straightforwardly extended to a tensor product setting. However, that would lead to basis functions aligned with the coordinate axes of the form $\phi_2(x,y) = \phi(x) \phi(y)$. Such functions are rarely used in practice because radial basis functions in 2D are, as the name implies, defined in terms of the radial distance:
\[
 \phi(\mathbf{x},\mathbf{y}) = \phi\left( |\mathbf{x}-\mathbf{y}|\right).
\]
In the next experiments we use the latter, standard definition of radial basis functions. Strictly speaking, the theory of this paper no longer applies. However, we illustrate that the guiding principles remain the same. In particular, we show that the linear scaling regime does lead to accurate approximations using radial basis functions in 2D, and that the choices of the parameters involved correspond to striking a balance between coefficient norm and approximation accuracy.

In an analogous manner to our one-dimensional setup, a bounding box of adjustable size $[-T_1,T_1]\times[-T_2,T_2]$ is chosen to contain the computational domain $\Omega$. The $N$ centers are placed in an hexagonal pattern inside the bounding box. The $M$ sample nodes within $\Omega$ also have a hexagonal pattern, although an investigation of optimal node distribution with possible local node refinement is out of the scope of this paper. The Gaussian radial function is uniquely fit for this setup since it is the only one that is separable in $x$ and $y$; i.e. $e^{-\epsilon^2 (x-x_n)^2+(y-y_n)^2} = e^{-\epsilon^2 (x-x_n)^2} e^{-\epsilon^2 (y-y_n)^2}$. So, since the number of centers in $x$ and in $y$ grow as $O(\sqrt{N})$, we choose $\epsilon = c \sqrt{N}$. An optimal $c$ value for the two-dimensional case is also out of the scope of this study.

\begin{figure}
    \centering
    \begin{minipage}{0.5\textwidth}
        \centering
        \includegraphics[trim=10cm 0cm 10cm 5cm, clip,width=1\textwidth]{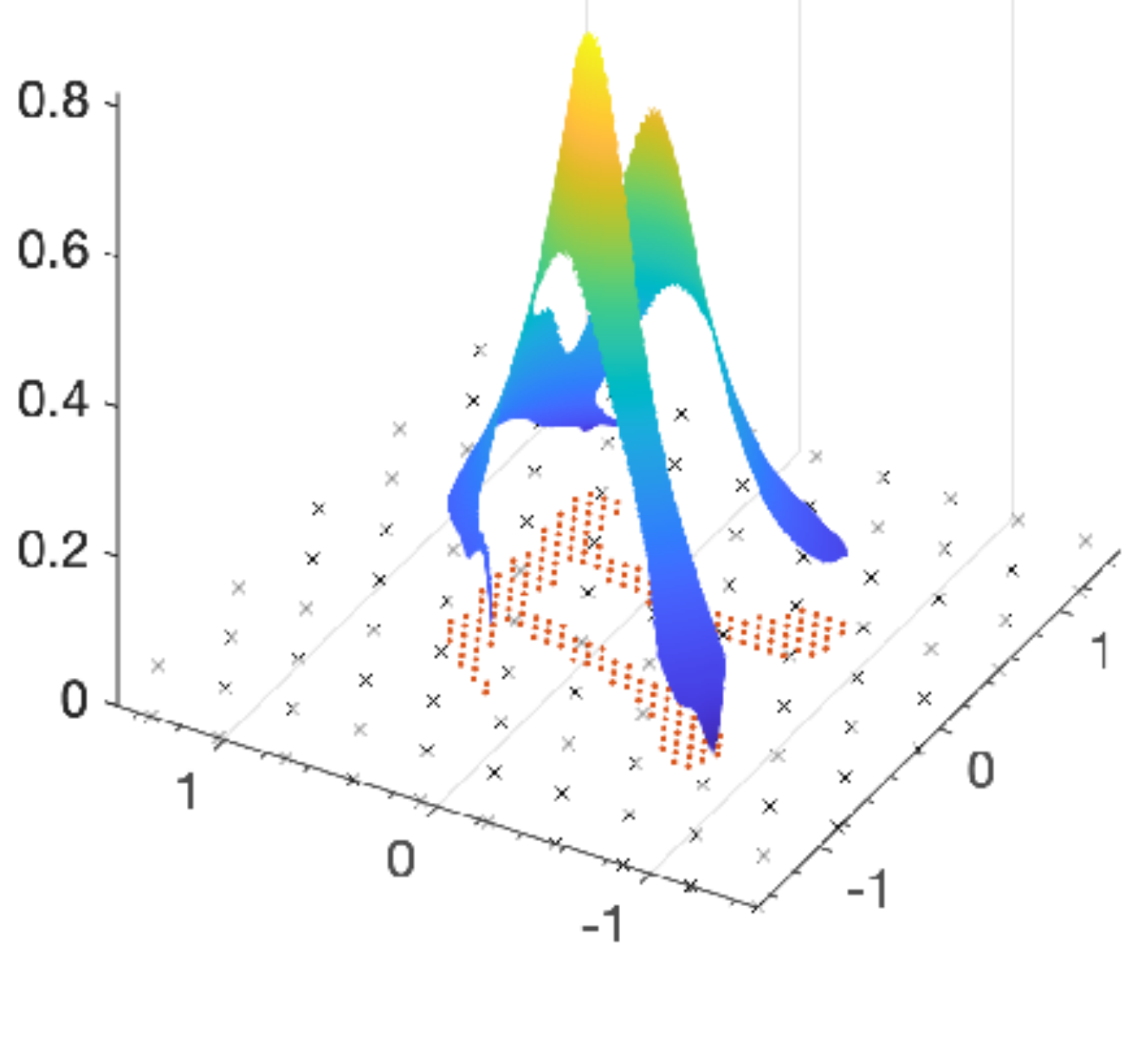}
    \end{minipage}\hfill
    \begin{minipage}{0.5\textwidth}
        \centering
        \includegraphics[trim=7cm 0cm 7cm 0cm, clip,width=1\textwidth]{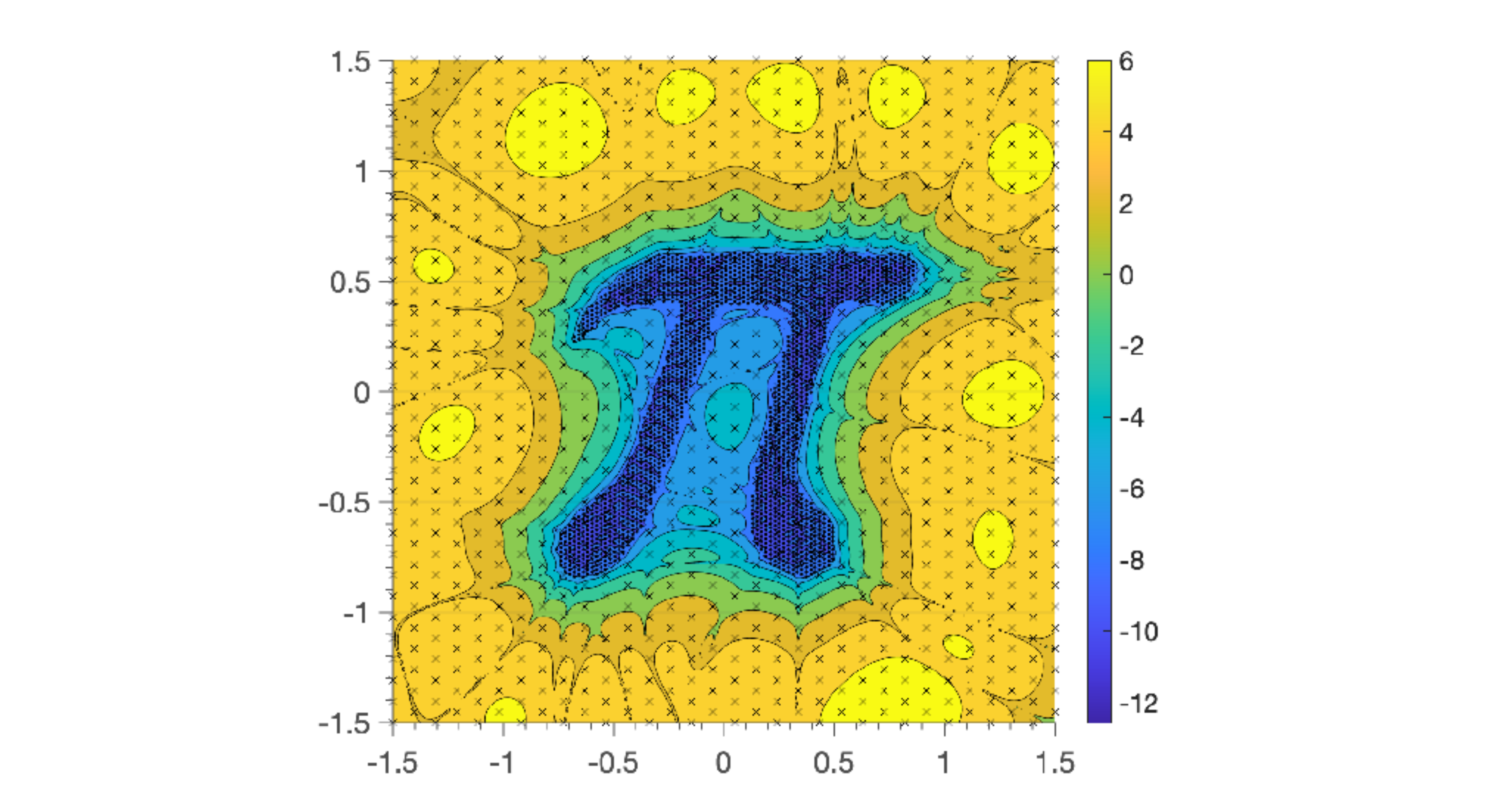}
    \end{minipage}
    \caption{Approximation of function $f(x,y)=\frac{1}{1+10 (x^2+y^2)}$ with $100$ centers and $200$ sample points inside the computational domain on the left and $1,000$ centers and $2,000$ sample points on the right. The right panel shows the approximation error in logarithmic scale. }\label{fig:2DCase}
\end{figure}

An example of this approximation scheme is shown in Fig.~\ref{fig:2DCase}.

\section{A least squares collocation scheme}

We conclude the paper with some experiments to indicate the usefulness of radial basis functions in combination with least squares for the solution of elliptic boundary value problems. To that end we formulate an RBF based collocation scheme, inspired by Kansa's method \cite{KansaMethod1,KansaMethod2},  using more collocation points than degrees of freedom.

We wish to solve $$-\Delta u = f, \; \rm{in} \; \Omega$$ subject to the Dirichlet boundary condition
\[
u = g, \; \rm{on} \; \delta \Omega.
\]
We choose $M_\Omega$ points $\{x_m\}$ inside $\Omega$, and an additional $M_{\partial \Omega}$ points $\{ y_m \}$ on the boundary. This results in a least squares system once $M = M_\Omega + M_{\partial \Omega} > N$.

We set out to solve the linear system $B \lambda = b$, where
\begin{equation*}
\label{eq:PDE_matrix_and_rhs}
B = \begin{pmatrix}
    -\Delta \phi_n (x_m)_{m,n=1}^{M_\Omega,N}\\[0.5cm]
    \phi_n (y_m)_{m,n=1}^{M_{\delta \Omega},N}\\
  \end{pmatrix} \in \mathbb{R}^{M \times N},\; b = \begin{pmatrix}
    f(x_m)^{M_\Omega}_{m=1}\\[0.5cm]
    g(y_m)^{M_{\partial \Omega}}_{m=1}\end{pmatrix} \in \mathbb{R}^{M}.
\end{equation*}
The upper block of the linear system is precisely the least squares matrix for the approximation of $f$ on $\Omega$, using the basis functions $-\Delta \phi_n$. The lower block is the least squares discretization of the boundary condition on $\partial \Omega$.

\begin{figure}[t]
    \centering
    \begin{minipage}{0.46\textwidth}
        \centering
        \includegraphics[width=1\textwidth]{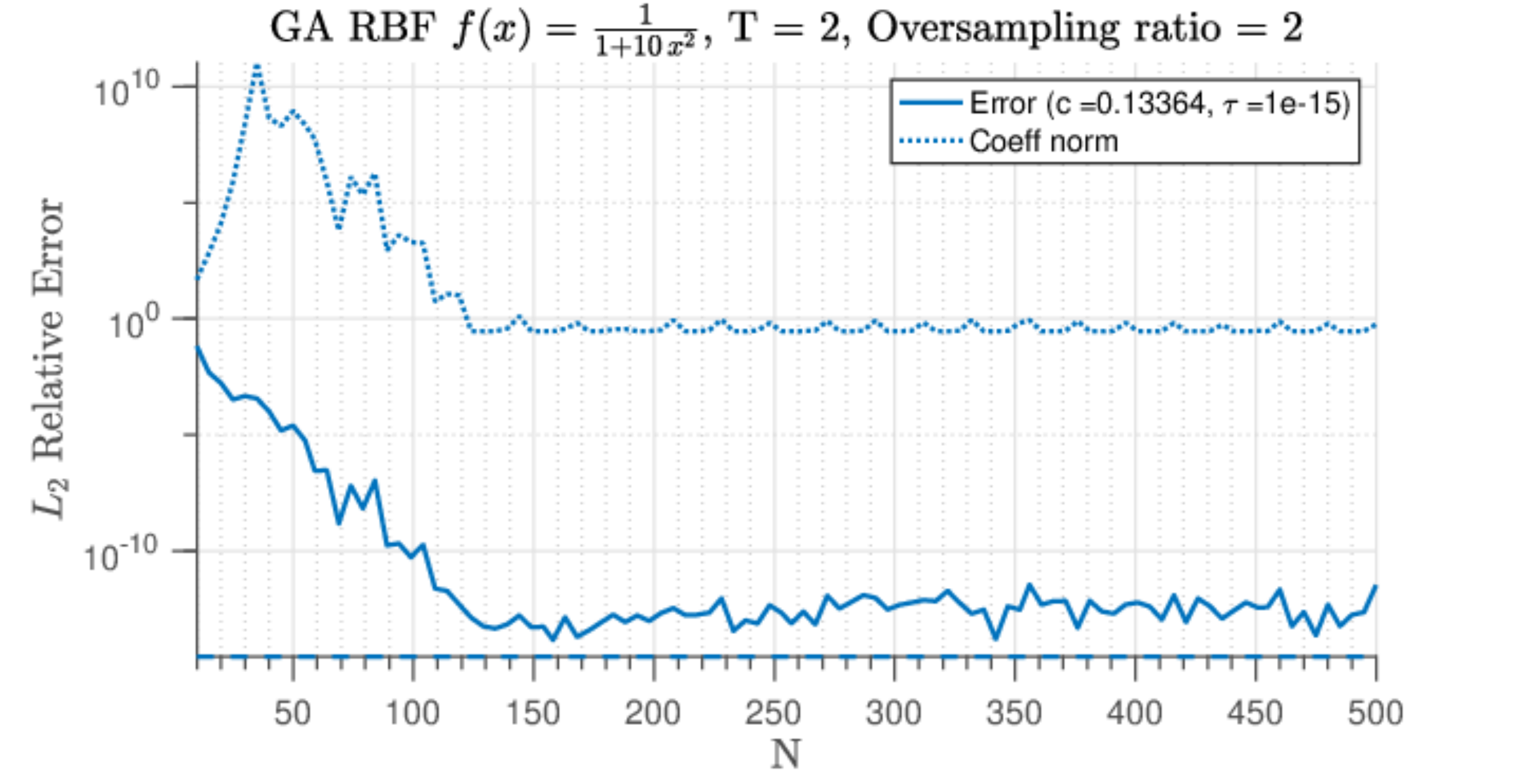}
    \end{minipage}\hfill
    \begin{minipage}{0.54\textwidth}
        \centering
        \includegraphics[width=1\textwidth]{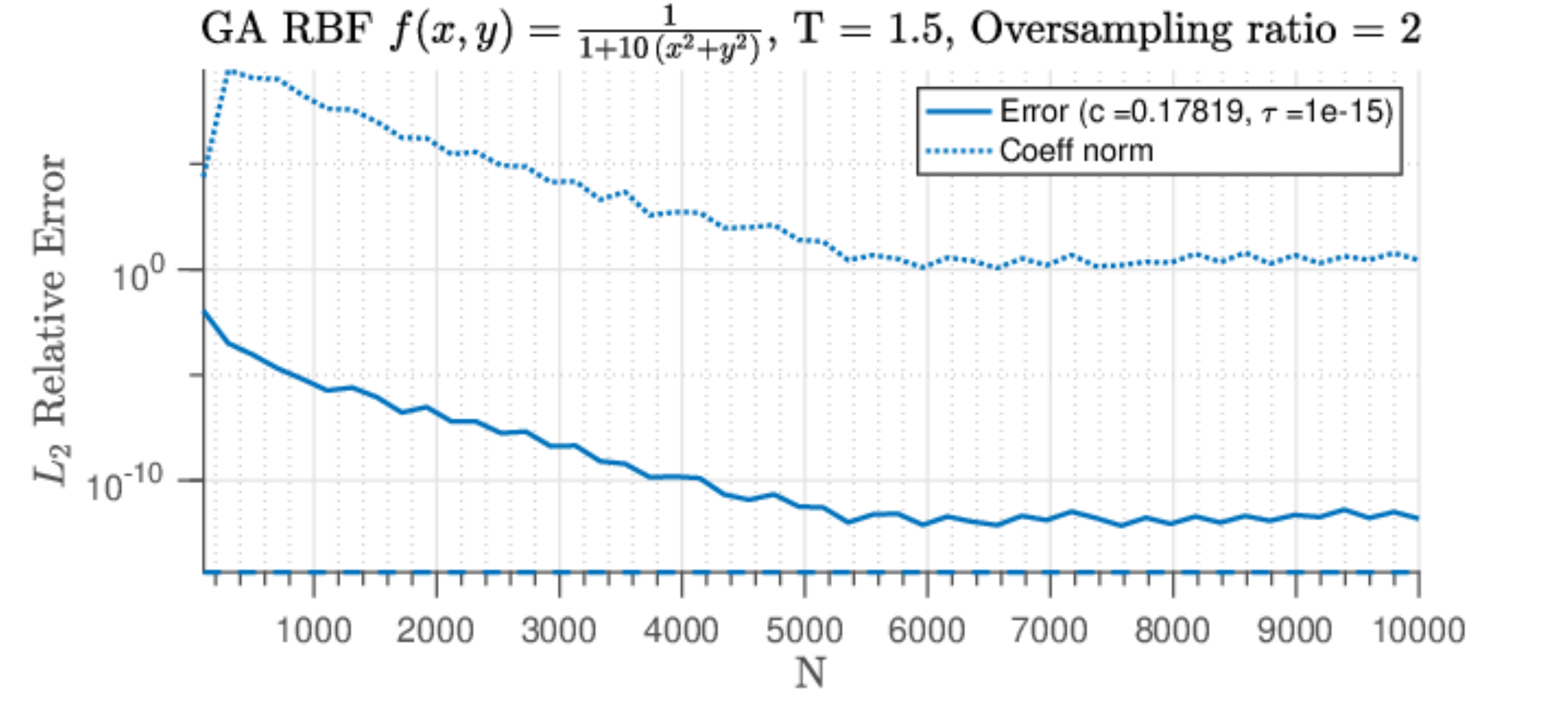}
    \end{minipage}
    \caption{The figure shows approximation error and coefficient norm as a function of the number of degrees of freedom $N$ for the solution of the one-dimensional ODE $-u''(x)=20 \frac{1-30 x^2}{(1+10 x^2)^3}$ with true solution $f(x)=\frac{1}{1+10 x^2}$ (left panels), and the two-dimensional PDE $-\Delta u = 40 \frac{1-10(x^2+y^2)}{(1+10(x^2+y^2))^3}$ with true solution $f(x,y)=\frac{1}{1+10(x^2+y^2)}$ (right panels). We chose the threshold $\tau=1e-15$. In both cases high accuracy is achieved and maintained for larger values of $N$.}\label{fig:PDE}
\end{figure}

The approximate solution is therefore
\begin{equation*}\label{RBF_direct_solution}
 f(x) \approx \sum_{n=1}^N \lambda_n \phi(\varepsilon(x - \xi_n)).
\end{equation*}
Results are shown in Fig.~\ref{fig:PDE} for a one-dimensional and two-dimensional Poisson equation. We have chosen linear scaling $\epsilon = c N$ using the proportionality constant~\eqref{eq:optimal_c} in 1D, and scaling of the form $c \sqrt{N}$ for the 2D problem, exactly as in~\S\ref{ss:2dcase} and similarly using a hexagonal grid.

The domain resembles the number $\pi$ and is based on a smooth parameterization of its boundary using a Fourier series of length $32$. The boundary values are derived from the given analytical solution. Other parameters, such as the number of points $M_\Omega$ and $M_{\partial \Omega}$, have been chosen heuristically to ensure sufficient oversampling.

A noteworthy observation is that the results in Fig.~\ref{fig:PDE} demonstrate high accuracy, which is maintained for larger values of $N$. This contrasts with other methods for PDEs based on global RBF approximations since Kansa, e.g.~\cite[\S3.7]{chen2014rbfcollocation} and~\cite[\S4.1]{fornberg2015rbf_pde}.

%\BLUE{
%TODO: we refer to Kansa's method, perhaps we can show that convergence to machine precision (rather than square root) can be achieved using least squares?
%}

\section{Concluding remarks}
\label{sec:conclusions}

The analysis and numerical results of this paper have confirmed the possibility of accurate and stable approximations using the Gaussian RBF in 1D. This leaves several interesting options for further research, such as the extension of the analysis to 2D and higher-dimensional problems and to other kinds of radial basis functions. An interesting computational challenge is the rapid solution of the rectangular linear systems. Finally, it is clear that in the application of RBF's to partial differential equations much remains to be explored.

%Some conclusions here.

%\subsection*{Open problems}

%\begin{itemize}
%    \item Analytic functions, geometric convergence
%    \item Extend analysis to $d$ dimensions
%    \item Extend analysis to other decaying RBFs, e.g. inverse quadratics
%    \item Global RBFs, e.g. quadratics, polyharmonic,...
%\end{itemize}

\section*{Acknowledgements}

We would like to thank Nick Trefethen for discussions on the topic of this paper, as well as for useful comments on this manuscript. This research was initiated during an inspiring colloquium meeting at Michigan Tech, for which we like to thank the institution.

\bibliographystyle{abbrv}
\bibliography{references}
\end{document}